\numberwithin{equation}{section}
\theoremstyle{plain}
\newtheorem{theorem}{Theorem}[section]
\newtheorem{corollary}[theorem]{Corollary}
\newtheorem{lemma}[theorem]{Lemma}
\newtheorem{proposition}[theorem]{Proposition}
\theoremstyle{definition}
\newtheorem{definition}[theorem]{Definition}
\newtheorem{example}[theorem]{Example}
\theoremstyle{remark}
\newtheorem{remark}[theorem]{Remark}
\newcommand{\N}{\mathbb{N}}
\newcommand{\Z}{\mathbb{Z}}
\DeclareMathOperator{\id}{\rm id }
\DeclareMathOperator{\im}{\rm im}
\DeclareMathOperator{\coker}{\rm coker}
\DeclareMathOperator{\Mod}{\rm Mod}
\DeclareMathOperator{\Hom}{\rm Hom}
\DeclareMathOperator{\Aut}{\rm Aut}
\DeclareMathOperator{\Ext}{\rm Ext}
\DeclareMathOperator{\supp}{\rm supp}
\DeclareMathOperator{\uHom}{\underline{\rm Hom}}
\DeclareMathOperator{\uExt}{\underline{\rm Ext}}
\begin{document}

\title{Skew Calabi-Yau property of normal extensions}

\author{G.-S. Zhou,\; Y. Shen\; and \; D.-M. Lu\ }

%\today

\address{\rm Zhou \newline \indent
Ningbo Institute of Technology, Zhejiang University, Ningbo 315100, China
\newline \indent E-mail: 10906045@zju.edu.cn
\newline\newline
\indent Shen \newline
\indent Department of Mathematics, Zhejiang Sci-Tech University, Hangzhou 310018, China \newline \indent E-mail: yuanshen@zstu.edu.cn
\newline\newline
\indent Lu \newline
\indent Department of Mathematics, Zhejiang University, Hangzhou 310027, China
\newline \indent E-mail: dmlu@zju.edu.cn}

\begin{abstract}
We prove that the skew Calabi-Yau property is preserved under normal extension for locally finite positively graded algebras. We also obtain a homological identity which describes the relationship between the Nakayama automorphisms of skew Calabi-Yau locally finite positively graded algebras and their normal extensions. As a preliminary, we show that the Nakayama automorphisms of skew Calabi-Yau algebras always send a regular normal element to a multiple of itself by a unit.  
\end{abstract}

\subjclass[2010]{16E65, 16W50, 16S20, 14A22}

% 16E10 (1991-now) Homological dimension
% 16E65 (2000-now) Homological conditions on rings
%       (generalizations of regular, Gorenstein, Cohen-Macaulay rings, etc.)
% 16S15 (2000-now) Finite generation, finite presentability, normal forms (diamond lemma, term-rewriting)
% 16W50 (1991-now) Graded rings and modules
% 14A22 (1991-now) Noncommutative algebraic geometry
% 16E45  (2000-now) Differential graded algebras and applications
% 68R15 (2000-now) Combinatorics on words

\keywords{graded algebra, skew Calabi-Yau, Nakayama automorphism, normal extension}

\maketitle
%\bigskip

%\tableofcontents

%\bigskip

%\newpage

\section{Introduction}

\newtheorem{maintheorem}{\bf{Theorem}}
\renewcommand{\themaintheorem}{\Alph{maintheorem}}
\newtheorem{mainproposition}[maintheorem]{\bf{Proposition}}

Since their introduction \cite{Ginz} in 2006, Calabi-Yau algebras have attracted considerable attention due to their fruitful connections to other topics in mathematics and physics. 
Skew or twisted Calabi-Yau algebras  slightly generalize Calabi-Yau algebras by loosing some of their symmetry. It is well-known that a connected graded algebra is skew Calabi-Yau in the graded sense if and only if it is Artin-Schelter regular \cite{RRZ}. We concern with the general situation, not-necessarily connected, for skew Calabi-Yau property. Interesting examples of such algebras arise naturally as factor algebras of quiver algebras \cite{Bock, BSW}, and as smash products of (connected graded) algebras by Hopf algebras \cite{Le-M}.

Normal extension is a common effective method to construct new algebras with specific ring-theoretic or homological properties. By definition, an algebra $A$ is a normal extension of another algebra $B$ if there exists a regular normal non-invertible element $\hbar\in A$ such that $A/(\hbar) \cong B$. In the graded context we shall require that $\hbar$ is homogeneous and the isomorphism is as graded algebras. This work studies the behavior of the skew Calabi-Yau property  under normal extensions. Take advantage of  Artin-Schelter regularity, it is not hard to see skew Calabi-Yau property is preserved under normal extension in the connected graded case. However, one needs new ideas and new techniques in the general situation. Our approach is independent of the theory of noncommutative dualizing complex  and there is no priori assumption of neotherianness on algebras. 

The main result of this paper (Theorem \ref{Calabi-Yau-normal-extension}) approximately   says that skew Calabi-Yau property is preserved under normal extensions for the larger class of locally finite positively graded algebras. It also gives an identity formula which describes the relationship between the Nakayama automorphisms of skew Calabi-Yau locally finite positively graded algebras and their  normal extensions. This formula was also obtained in \cite[Lemma 1.5]{RRZ} and  \cite[Theorem 2.6]{CKS} under some extra conditions. An interesting preparatory result for the formula is Proposition \ref{Nakayama-auto-behavior}, which  states that the Nakayama automorphisms of skew Calabi-Yau algebras always send a regular normal element to a  multiple of itself by a unit. 

The paper is organized as follows. In section 2, we fix basic notations, recall the definition of skew Calabi-Yau algebras and reveal some features of the Nakayama automorphisms of skew Calabi-Yau algebras. Section 3 is the core of this paper. It  is devoted to study the behavior of the skew Calabi-Yau property under normal extensions. In section 4, we consider the skew Calabi-Yau property for graded algebras endowed with a filtration by homogeneous subspaces from the perspective that their Rees algebras are normal extensions of their associated graded algebras. 
 
Throughout the paper, all algebras and unadorned tensor products $\otimes$  are over a fixed field $\mathbb{K}$. We reserve $G$ to stand for an abelian group. For a ring $A$, a  left (resp. right) $A$-module $M$ and an element $z\in A$, we write $\lambda_z$ (resp. $\rho_z$) for the map  $M\to M$ induced by the scalar multiplication of $z$ on $M$. 

\section{Preliminaries}

Let $A=\bigoplus_{\gamma\in G}A_\gamma$ be a $G$-graded algebra. The set $\supp A=\{\gamma\in G\,|\, A_\gamma\neq0\}$ is called the {\it support} of $A$. If every piece $A_\gamma$ is of finite dimension then $A$ is said to be {\it locally finite}.

We denote by $\Mod^G A$ the category of $G$-graded left $A$-modules with morphisms the $A$-linear maps that preserve degrees. The  category of $G$-graded right $A$-modules (resp. $G$-graded $A$-bimodules) is identified with $\Mod^GA^o$ (resp. $\Mod^GA^e$), where  $A^o$ is the opposite algebra of $A$ and  $A^e$ is the enveloping algebra of $A$, i.e., $A^e=A\otimes A^o$. 
For each $\gamma\in G$, the \emph{$\gamma$-shift functor} $\Sigma_\gamma:\Mod^GA\to \Mod^GA$ sends an object $M$ to $M(\gamma)$, which equals to $M$ as a left $A$-module but with $G$-grading given by $M(\gamma)_{\gamma'}= M_{\gamma+\gamma'}$. The functor $\Sigma_\gamma$ acts as the identity map on morphisms.  For $M,\, N\in \Mod^GA$, we write $$\uHom_{A}^G(M,N):=\bigoplus\nolimits_{\gamma\in G} \Hom_{\Mod^GA}(M,\Sigma_\gamma \, N)$$
and 
$$\uExt_A^{i,G}(M,N):=\bigoplus\nolimits_{\gamma\in G}\Ext_{\Mod^GA}^i(M,\Sigma_\gamma \, N).$$
Note that $\uHom_A^G(M,N)$ and $\uExt_A^{i,G}(M,N)$ become $G$-graded left modules if $M$ is a $G$-graded bimodule, $G$-graded right modules if $N$ is a $G$-graded bimodule, and $G$-graded bimodules if both $M$ and $N$ are $G$-graded bimodules. Similar remark applies to tensor products.

We denote by $\Aut_G(A)$  the group of automorphisms of $A$ that preserve degrees, and by $U(A_0)$ the group of units of $A_0$. Every unit $\xi\in U(A_0)$ gives rise to an automorphism $c_\xi\in \Aut_{G}(A)$ by $a\mapsto \xi a \xi^{-1}$. Such an automorphism is said to be \emph{inner}. For a module $M\in \Mod^GA^e$ and automorphisms $\mu,\, \nu\in \Aut_G(A)$, we define a new module ${}^\mu M^\nu \in \Mod^GA^e$ as follow. It has the same underling vector space and $G$-grading as $M$; the left action and the right action is given by $a*m=\mu(a)\cdot m$ and $m*a=m\cdot \nu(a)$ respectively.
Note that ${}^1A^\mu \cong {}^1A^\nu$ in $\Mod^GA^e$ if and only if $\mu \circ \nu^{-1}$ is inner.

\begin{definition}\label{definition-CY}
We say that a $G$-graded algebra $A$ is \emph{skew $G$-Calabi-Yau of dimension $d$} if
\begin{enumerate}
\item $A$ is \emph{homologically $G$-smooth}, that is,  $A$ has a bounded projective resolution in $\Mod^GA^e$ with each term finitely generated; and
\item there is an automorphism $\mu\in \Aut_G(A)$ and an index $\mathfrak{l}\in G$ such that
\begin{equation*}
\uExt_{A^e}^{i,G} (A,A^e) \cong \left\{
\begin{array}{ll}
0,&i\neq d\\
\Sigma_{\mathfrak{l}}{}^1\!A^\mu, & i=d
\end{array}\right. \quad \text{as $G$-graded $A$-bimodules.}
\end{equation*}
Here, $1=\id_A$ denotes the identity map of $A$.
\end{enumerate}
We call $\mu_{A,G}:=\mu$ the \emph{Nakayama automorphism} of $A$ and $\mathfrak{l}_{A,G}:=\mathfrak{l}$ the \emph{Gorenstein parameter} of $A$. If $\mu_{A,G}$ happens to be the identity map then we say $A$ is \emph{$G$-Calabi-Yau of dimension $d$}.
\end{definition}

\begin{remark}
Due to \cite[Lemma 1.2]{RRZ}, a connected $\Z^r$-graded algebra  $A$ is skew $\Z^r$-Calabi-Yau of dimension $d$ if and only if it is $\Z^r$-AS-regular of dimension $d$, which by definition means that
\begin{enumerate}
\item the global dimensions of $\Mod^{\Z^r}A$ and $\Mod^{\Z^r} A^o$ are both finite, and
\item there is an index $\mathfrak{l}\in G$ such that
\begin{equation*}
\uExt_{A}^{i,G} (\mathbb{K}, A) \cong  \uExt_{A^o}^{i,G} (\mathbb{K}, A) \cong \left\{
\begin{array}{ll}
0,&i\neq d\\
\Sigma_{\mathfrak{l}}\, \mathbb{K}, & i=d
\end{array}\right. \quad \text{as $G$-graded vector spaces.}
\end{equation*}
\end{enumerate}
Here, ``AS'' stands for Artin-Schelter, and connected means that $A_0=\mathbb{K}$ and $\supp A\subseteq \N^r$.
\end{remark}

The next result reveals some features of the Nakayama automorphisms of skew-Calabi-Yau algebras. An element  $z\in A$ is  called \emph{$\sigma$-skew}, where $\sigma$ is an automorphism of $A$, if $z\cdot a=\sigma(a)\cdot z$ for every element $a\in A$ and $\sigma(z)=z$.  Note that every central element is $\id_A$-skew; and if $z$ is homogeneous and regular normal, then there exists a unique  automorphism $\tau_z\in \Aut_G(A)$ so that $z$ is $\tau_z$-skew. In the sequel, let $Z(A)$ be the center of $A$, which is a homogeneous subalgebra of $A$.

\begin{proposition}\label{Nakayama-auto-behavior}
Let $A$ be a $G$-graded algebra and $d\geq0$ an integer. Assume $\uExt_{A^e}^{d,G}(A,A^e)\cong \Sigma_{\mathfrak{l}}{}^1\!A^\mu$ in $\Mod^GA^e$ for some $\mu\in\Aut_G(A)$ and $\mathfrak{l}\in G$. Then there is a map $\Xi : \Aut_G(A) \to U(A_0)$ such that
\begin{enumerate}
\item $\Xi(\sigma\circ\tau) = \Xi(\sigma) \cdot \sigma(\Xi(\tau))$ for every pair of automorphisms $\sigma,\,\tau \in \Aut_G(A)$.
\item $\mu \circ \sigma \circ \mu^{-1} \circ \sigma^{-1}=c_{\Xi(\sigma)}$  for every automorphism $\sigma \in \Aut_G(A)$.
\item $\mu(z) = \Xi(\sigma)\cdot z\,$  for every automorphism $\sigma \in \Aut_G(A)$ and every $\sigma$-skew element $z\in A$.
\end{enumerate}
Consequently, $\mu$ is in the center of $\Aut_G(A)$  when $U(A_0)\subset Z(A)$, and $\mu(z) =z$ when $z$ is central.
\end{proposition}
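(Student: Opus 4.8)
The plan is to extract the map $\Xi$ directly from the hypothesized bimodule isomorphism. Write $E := \uExt_{A^e}^{d,G}(A,A^e)$ and fix, once and for all, an isomorphism of $G$-graded $A$-bimodules $\Phi : E \xrightarrow{\ \sim\ } \Sigma_{\mathfrak l}\,{}^1\!A^\mu$. The key observation is functoriality: every $\sigma \in \Aut_G(A)$ acts on $A^e = A\otimes A^o$ by $\sigma^e := \sigma\otimes\sigma$, hence induces a graded autoequivalence of $\Mod^G A^e$, and because $\Ext$ is built out of $\Hom_{A^e}(-,A^e)$ one gets a natural graded additive bijection $E \to {}^{\sigma}E^{\sigma}$ (concretely: twisting a projective resolution of $A$ by $\sigma^e$ and twisting $A^e$ itself by $\sigma^e$, using that ${}^{\sigma}(A^e)^{\sigma}\cong A^e$ as bimodules). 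Transporting this through $\Phi$ on both sides produces a graded isomorphism of $A$-bimodules $\Sigma_{\mathfrak l}\,{}^1\!A^\mu \to {}^{\sigma}\!\bigl(\Sigma_{\mathfrak l}\,{}^1\!A^\mu\bigr)^{\sigma} \cong \Sigma_{\mathfrak l}\,{}^{\sigma}\!A^{\mu\sigma}$; composing with the evident identification this is a graded $A$-bimodule isomorphism ${}^1\!A^\mu \cong {}^{\sigma}\!A^{\mu\sigma}$, i.e. ${}^1\!A^{\sigma^{-1}} \cong {}^1\!A^{\mu^{-1}\sigma^{-1}\mu\sigma^{-1}}$ after rebalancing — more cleanly, ${}^1\!A^{\mu^{-1}\circ\sigma^{-1}\circ\mu\circ\sigma}\cong {}^1\!A^{1}$ so that $\mu^{-1}\sigma^{-1}\mu\sigma$ is inner. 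Such an isomorphism is given by left multiplication by a unit of $A_0$ (since $A$ is graded and the map is a left-module endomorphism of $A$ up to the right twist, it is $\rho$ of a unit, or by the symmetric argument $\lambda$ of a unit); \emph{define} $\Xi(\sigma)\in U(A_0)$ to be that unit, normalized so that the isomorphism sends $1\mapsto \Xi(\sigma)$. This immediately yields (2): conjugating, $c_{\Xi(\sigma)} = \mu\circ\sigma\circ\mu^{-1}\circ\sigma^{-1}$ (up to swapping $\sigma\leftrightarrow\sigma^{-1}$, which I will fix by choosing the convention consistently throughout).

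For the cocycle identity (1), I would feed $\sigma\circ\tau$ through the same machine and compare with the composite of the two individual twists. The functorial action of $\sigma\circ\tau$ is the composite of the action of $\sigma$ after that of $\tau$ (pseudofunctoriality of the twist construction on $\Mod^G A^e$), so the unit attached to $\sigma\tau$ is obtained by applying the $\sigma$-twisted version of "multiply by $\Xi(\tau)$" and then "multiply by $\Xi(\sigma)$"; tracing $1\mapsto \Xi(\tau)\mapsto \sigma(\Xi(\tau))\cdot\Xi(\sigma)$ (or $\Xi(\sigma)\cdot\sigma(\Xi(\tau))$, depending on side conventions, which I will pin down) gives $\Xi(\sigma\circ\tau)=\Xi(\sigma)\cdot\sigma(\Xi(\tau))$. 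This is the crossed-homomorphism relation stated.

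For (3), let $z$ be $\sigma$-skew, so $z\,a = \sigma(a)\,z$ for all $a$ and $\sigma(z)=z$. Right multiplication $\rho_z : {}^1\!A^{\sigma} \to A$ (where the source carries the right $\sigma$-twist) is then a homomorphism of $A$-bimodules: left-linearity is clear, and right-linearity reads $\rho_z(m * a) = \rho_z(m\,\sigma(a)) = m\,\sigma(a)\,z = m\,z\,a = \rho_z(m)\,a$. Apply $\uExt_{A^e}^{d,G}(-,A^e)$; combined with the twist-naturality already set up and with $\Phi$, this endomorphism-up-to-twist of ${}^1\!A^\mu$ corresponds, on the level of $A$, to a left-multiplication operator, and chasing $1$ through shows it is $\lambda_{\mu(z)}$ on one side and $\lambda_{\Xi(\sigma)\cdot(\text{something})}$ on the other — more precisely the naturality square identifies the operator induced by $\rho_z$ with $\lambda_{\mu(z)}$, while the twist-comparison identifies the same operator with $\lambda_{\Xi(\sigma)\,z}$ composed appropriately, forcing $\mu(z)=\Xi(\sigma)\,z$. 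The final "Consequently" sentence is then pure bookkeeping: if $U(A_0)\subseteq Z(A)$ then $c_{\Xi(\sigma)}=\id$ for all $\sigma$, so (2) says $\mu$ commutes with every $\sigma\in\Aut_G(A)$; and if $z$ is central it is $\id_A$-skew, so (3) with $\sigma=\id$ gives $\mu(z)=\Xi(\id)\,z$, and (1) with $\sigma=\tau=\id$ forces $\Xi(\id)=1$, whence $\mu(z)=z$.

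The main obstacle I anticipate is \emph{bookkeeping of sides and twists}, not any deep idea: making the functorial twist-action on $\uExt_{A^e}^{d,G}(A,A^e)$ precise (it is a right-derived functor on a non-commutative $A^e$, and one must track how the bimodule structure on $A^e$ interacts with the twist $\sigma^e$, using ${}^{\sigma^e}(A^e)^{\sigma^e}\cong A^e$), and then being scrupulously consistent about left-versus-right twists so that the cocycle in (1) comes out as $\Xi(\sigma)\cdot\sigma(\Xi(\tau))$ rather than its mirror image. A secondary point worth care is the identification "a graded $A$-bimodule map ${}^1\!A^\nu \to {}^1\!A^{\nu'}$ is multiplication by a unit of $A_0$, and it is nonzero iff $\nu'\circ\nu^{-1}$ is inner" — this uses that $A$ is generated as a left module by $1\in A_0$ together with $G$-homogeneity of the map, and it is the linchpin that turns the bimodule isomorphisms into honest elements $\Xi(\sigma)\in U(A_0)$.
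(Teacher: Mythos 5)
Your proposal is correct and follows essentially the same route as the paper: the paper's Lemma \ref{Ext-group-structure} constructs exactly your functorial $(\sigma,\sigma)$-linear twist map $\Upsilon_{d,\sigma}$ on $\uExt_{A^e}^{d,G}(A,A^e)$ (with the composition rule and the identity $\Upsilon_{d,\sigma}\circ\rho_z=\lambda_z$ for $\sigma$-skew $z$ verified by the diagram chases you defer as ``bookkeeping''), and the proof of the proposition then transports it through a fixed isomorphism $\Phi$ and evaluates at $1_A$ to extract the unit, just as you propose (the paper fixes your side/inverse conventions by setting $\Xi(\sigma)=\Upsilon'_\sigma(1_A)^{-1}$). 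No substantive difference in method.
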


The proof of this proposition is based on the forthcoming lemma, which connects the group $\Aut_G(A)$ and the $A$-bimodule  $\uExt_{A^e}^{d,G}(A,A^e)$. For modules $M,N\in \Mod^GA^e$ and automorphisms $\mu,\nu\in \Aut_G(A)$, a morphism  $f: M\to N$ in $\Mod^G\mathbb{K}$ is called \emph{$(\mu,\nu)$-linear} if it gives a morphism $f:M\to {}^\mu N^\nu$ in $\Mod^GA^e$. For an automorphism  $\sigma\in \Aut_G(A)$ and an integer  $d\in \Z$, define the $(\sigma,\sigma)$-linear map
$$\Upsilon_{d,\sigma}: \uExt_{A^e}^{d,G}(A,A^e) \to \uExt_{A^e}^{d,G}(A,A^e)$$
to be the composition of following isomorphisms in $\Mod^GA^e$:
\begin{eqnarray*}
\uExt_{A^e}^{d,G}(A,A^e) \xrightarrow{(\sigma^{-1})^*} \uExt_{A^e}^{d,G}({}^{\sigma} A^\sigma,A^e) \xrightarrow{\cong} \uExt_{A^e}^{d,G}(A,{}^{\sigma^{-1}} (A^e)^{\sigma^{-1}}) \xrightarrow{(\sigma\otimes \sigma)_*} {}^\sigma \uExt_{A^e}^{d,G}(A,A^e)^\sigma.
\end{eqnarray*}
Also, we write $\lambda_z$ (resp. $\rho_z$) for the scalar multiplication of $z\in A$ on any left (resp. right) $A$-module.

\begin{lemma}\label{Ext-group-structure}
Let $A$ be a $G$-graded algebra and $d\geq0$ an integer. Then
\begin{enumerate}
\item $\Upsilon_{d,\sigma\circ \tau}= \Upsilon_{d,\sigma}\circ \Upsilon_{d,\tau}$ for every pair of automorphisms $\sigma,\, \tau \in \Aut_G(A)$.
\item $\Upsilon_{d,\sigma} \circ \rho_z =\lambda_z\,$ for every automorphism $\sigma \in \Aut_G(A)$ and every $\sigma$-skew element $z\in A$.
\end{enumerate}
Consequently,  the $A$-bimodule $\uExt_{A^e}^{d,G}(A,A^e)$ is central over $Z(A)$.
\end{lemma}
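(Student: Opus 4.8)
The plan is to unwind the definition of $\Upsilon_{d,\sigma}$ as a composition of three explicit isomorphisms and to track what each of them does, reducing everything to the functoriality of $\uExt_{A^e}^{d,G}(-,-)$ in both variables. For part (1), I would set up the comparison diagram: apply the defining three-step composition for $\Upsilon_{d,\sigma}$ to the result of $\Upsilon_{d,\tau}$, and the three-step composition for $\Upsilon_{d,\sigma\circ\tau}$ directly. The key observation is that $(\sigma^{-1})^*$ is contravariant-functorial, $(\sigma\otimes\sigma)_*$ is covariant-functorial, and the middle "Hom-tensor" identification ${}^\sigma A^\sigma \rightsquigarrow {}^{\sigma^{-1}}(A^e)^{\sigma^{-1}}$ is the standard adjunction moving twists from the first argument to the second; all three are natural, so the two composites agree after one checks that the twists match up, i.e. that $({}^{\sigma^{-1}\circ\tau^{-1}}(A^e)^{\sigma^{-1}\circ\tau^{-1}}) = {}^{\tau^{-1}}\big({}^{\sigma^{-1}}(A^e)^{\sigma^{-1}}\big)^{\tau^{-1}}$ and similarly $(\sigma\circ\tau)^* = \tau^*\circ\sigma^*$. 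This is a diagram-chase with no real content beyond bookkeeping of which automorphism acts on which side.

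For part (2), I would use the fact that a $\sigma$-skew regular-or-not element $z$ gives a bimodule map: since $z\cdot a = \sigma(a)\cdot z$ and $\sigma(z)=z$, right multiplication $\rho_z : A \to A$ is a morphism $A \to {}^{\sigma^{-1}}A^{1}$ in $\Mod^GA^e$ (equivalently a $(\sigma^{-1},1)$-linear endomorphism of $A$), while left multiplication $\lambda_z : A \to A$ is a morphism $A \to {}^{1}A^{\sigma}$; more precisely $\lambda_z = \rho_z$ as maps of sets when $z$ is $\sigma$-skew, but they carry different bimodule-map interpretations. Applying the contravariant functor $\uExt_{A^e}^{d,G}(-,A^e)$ to $\rho_z$ and comparing with applying it to $\lambda_z$, together with the identification of $\uExt_{A^e}^{d,G}({}^{\sigma}A^{\sigma},A^e)$ with the twist of $\uExt_{A^e}^{d,G}(A,A^e)$, should show exactly that $\Upsilon_{d,\sigma}\circ\rho_z$ and $\lambda_z$ are induced by the same underlying map, hence equal. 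The cleanest route is to observe $\rho_z = (\sigma^{-1})^* \circ (\text{mult.\ by } z \text{ on } {}^{\sigma}A^{\sigma})$ and that multiplication by $z$ on $A$ intertwines the three structure isomorphisms defining $\Upsilon$ in the evident way because $z$ is $\sigma$-skew.

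The final consequence — that $\uExt_{A^e}^{d,G}(A,A^e)$ is central over $Z(A)$ — follows by taking $\sigma=\id_A$ in part (2): every $z\in Z(A)$ is $\id_A$-skew, so $\Upsilon_{d,\id}=\id$ gives $\rho_z = \lambda_z$ on $\uExt_{A^e}^{d,G}(A,A^e)$, which is precisely the assertion of $Z(A)$-centrality.

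I expect the main obstacle to be purely notational: keeping consistent track of left versus right twists under the Hom–tensor adjunction $\uExt_{A^e}^{d,G}({}^{\mu}M^{\nu},N) \cong \uExt_{A^e}^{d,G}(M,{}^{\mu^{-1}}N^{\nu^{-1}})$ and verifying it is an isomorphism of $G$-graded \emph{bimodules} with the correct grading shift, rather than merely of vector spaces. There is also a mild subtlety in part (2) in that $z$ need not be a unit or even regular, so $\rho_z$ and $\lambda_z$ are endomorphisms, not isomorphisms, and one must phrase the argument functorially (applying $\uExt$ to an honest bimodule morphism) rather than by transporting structure along an isomorphism; once that is set up correctly the computation is immediate.
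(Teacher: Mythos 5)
Your overall strategy is the paper's: part (1) is exactly the naturality/functoriality diagram chase that the paper carries out, and your deduction of $Z(A)$-centrality from (2) with $\sigma=\id_A$ (using that $\Upsilon_{d,\id_A}$ is the identity and central elements are $\id_A$-skew) is also the paper's. So the route is right; the problem is that in part (2) the concrete assertions you do commit to are wrong on precisely the bookkeeping that constitutes the proof.

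With the paper's conventions (${}^{\mu}N^{\nu}$ carries the actions $a*n=\mu(a)\cdot n$ and $n*a=n\cdot\nu(a)$, and ``$(\mu,\nu)$-linear'' means a morphism into ${}^{\mu}N^{\nu}$), the $\sigma$-skew relation $z\cdot a=\sigma(a)\cdot z$, equivalently $a\cdot z=z\cdot\sigma^{-1}(a)$, makes $\rho_z$ a morphism $A\to{}^{1}A^{\sigma^{-1}}$ (i.e.\ $(1,\sigma^{-1})$-linear) and $\lambda_z$ a morphism $A\to{}^{\sigma}A^{1}$; you have assigned both twists to the wrong sides. Likewise the parenthetical ``$\lambda_z=\rho_z$ as maps of sets'' is false: one has $\lambda_z=\rho_z\circ\sigma$, equivalently $\rho_z=\sigma^{-1}\circ\lambda_z$ (using $\sigma(z)=z$). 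Since the entire content of (2) is to relate, through the three isomorphisms defining $\Upsilon_{d,\sigma}$, the maps induced by $z$ in the first argument (where multiplication by $z\otimes 1$ and $1\otimes z^o$ on $A$ are $\lambda_z$ and $\rho_z$, entering contravariantly) with the bimodule multiplications on $\uExt_{A^e}^{d,G}(A,A^e)$ coming from the second argument (where $\lambda_z=(\rho_{1\otimes z^o})_*$ and $\rho_z=(\rho_{z\otimes1})_*$), feeding in the swapped linearities makes the analogue of the paper's second commutative diagram fail to commute, and the composite will not come out as $\lambda_z$. The flaw is repairable --- correcting the two linearity statements and redoing the chase reproduces the paper's proof --- but as written the key step of part (2) does not go through.
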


\begin{proof}
In the proving process, we write $(-,-)$ for $\uExt_{A^e}^{d,G}(-,-)$ temporarily to simplify the notation. 
Part (1)  can be read readily from the following commutative diagram in $\Mod^GA^e$:
$$
\xymatrix{
(A,A^e) \ar[r]^-{(\tau^{-1} \sigma^{-1})^*} \ar[dr]|-{(\tau^{-1})^*} & ({}^{ \sigma   \tau }A^{ \sigma   \tau }, A^e) \ar[rr]^-{\cong} \ar[dr]|-{\cong} & & (A, {}^{(\sigma\tau)^{-1}}(A^e)^{(\sigma\tau)^{-1}}) \ar[r]^-{(\sigma\tau\otimes \sigma\tau)_*} \ar[d]|-{(\tau\otimes\tau)_*}& {}^{\sigma\tau}(A,A^e)^{\sigma\tau}  \\
&({}^{\tau} A^{\tau},A^e) \ar[d]|-{\cong} \ar[u]|-{(\sigma^{-1})^*} & ({}^{\sigma} A^{\sigma}, {}^{\tau^{-1}}(A^e)^{\tau^{-1}}) \ar[ur]|-{\cong} \ar[dr]|-{(\tau\otimes\tau)_*}&{}^\tau(A,{}^{\sigma^{-1}}(A^e)^{\sigma^{-1}})^\tau \ar[ur]|-{(\sigma\otimes\sigma)_*}& \\
&(A,{}^{\tau^{-1}}(A^e)^{\tau^{-1}} ) \ar[ur]|-{(\sigma^{-1})^*} \ar[dr]|-{(\tau\otimes\tau)_*}&&{}^\tau({}^{\sigma} A^{\sigma},A^e)^\tau \ar[u]|-{\cong}&\\
&&{}^\tau(A,A^e)^\tau \ar[ur]|-{(\sigma^{-1})^*}.&&
}
$$
To see Part (2) we may assume further that $z$ is homogeneous of degree $\gamma$.  It is not hard to check that the following  diagram in $\Mod^GA^e$ is commutative:
\begin{eqnarray*}
\xymatrix{
&(A,A^e) \ar[d]|-{\cong}  \ar[r]^-{(\sigma^{-1})^*} & ({}^{\sigma} A^{\sigma},A^e) \ar[d]|-{\cong} \ar[dr]|-{\cong} & \\
(A,A^e) \ar[r]|-{\cong} \ar[ur]|-{\id} \ar[d]|-{\id} \ar[dr]|-{(\lambda_{1\otimes z^o})_*} & ({}^{1}A^{ \sigma^{-1}}, {}^{1}(A^e)^{\sigma^{-1}}) \ar[r]^-{(\sigma^{-1})^*} \ar[d]|-{(\lambda_{1\otimes z^o})^*} & ({}^{\sigma} A^{ 1},{}^{1}(A^e)^{\sigma^{-1}} ) \ar[r]|-{\cong} \ar[d]|-{(\lambda_{z\otimes 1})^*} & (A,{}^{\sigma^{-1}}(A^e)^{\sigma^{-1}}) \ar[dl]|-{(\lambda_{z\otimes 1})_*} \ar[d]|-{(\sigma\otimes\sigma)_*}\\
(A,A^e) \ar[dr]|-{\lambda_z=(\rho_{1\otimes z^o})_*} & \Sigma_{\gamma}(A,{}^{1} (A^e)^{\sigma^{-1}}) \ar[r]^{\id} \ar[d]|-{(1\otimes \sigma)_*}& \Sigma_{\gamma}(A,{}^{1} (A^e)^{\sigma^{-1}})\ar[d]|-{(1\otimes \sigma)_*} & {}^\sigma(A,A^e)^\sigma \ar[dl]|-{\rho_z=(\rho_{z\otimes1})_*} \\
&\Sigma_\gamma {}^\sigma(A,A^e)^1 \ar[r]^{\id}& \Sigma_\gamma {}^\sigma(A,A^e)^1.&
}
\end{eqnarray*}
Thereof, we have $\Upsilon_{d,\sigma} \circ \rho_z =\rho_z\circ \Upsilon_{d,\sigma} = \lambda_z$, as required in Part (2).
Since $\Upsilon_{d,\id_A}$ is the identity map and central elements are $\id_A$-skew, the last statement follows directly from (2).
\end{proof}

\begin{proof}[Proof of Proposition \ref{Nakayama-auto-behavior}]
Fix an isomorphism $\Phi: \Sigma_{-\iota}\uExt_{A^e}^{d,G}(A,A^e)\xrightarrow{\cong} {}^1\!A^\mu$ in $\Mod^GA^e$. For every automorphism $\sigma\in \Aut_G(A)$,  let $\Upsilon_\sigma':{}^1\!A^\mu \to {}^1\!A^\mu$ be the composition of the following bijective maps
$$
{}^1\!A^\mu \xrightarrow{\Phi^{-1}} \Sigma_{-\iota}\uExt_{A^e}^{d,G}(A,A^e)
\xrightarrow{\Sigma_{-\iota} \Upsilon_{d,\sigma}} \Sigma_{-\iota}\uExt_{A^e}^{d,G}(A,A^e) \xrightarrow{\Phi}{}^1\!A^\mu.
$$
Clearly $\Upsilon_\sigma'$ is $(\sigma,\sigma)$-linear, so for every $a\in A$ one has
$$
a\cdot \Upsilon_\sigma'(1_A) = \Upsilon_\sigma'(\sigma^{-1}(a)) = \Upsilon_\sigma'(1_A)\cdot (\mu\circ \sigma \circ \mu^{-1} \circ \sigma^{-1}) (a).
$$
Immediately, it follows that $\Upsilon_\sigma'(1_A)$ is invertible in $A_0$. Define 
$$\Xi:\Aut_G(A) \to U(A_0), \quad \sigma \mapsto \Upsilon_\sigma'(1_A)^{-1}.$$ Then the above equation gives (2). 
By  Lemma \ref{Ext-group-structure} (2), one has 
 $ \Upsilon_{\sigma}' \circ \rho_z=\lambda_z$ and thereof
$$
\mu(z) =\Xi(\sigma)\cdot \Upsilon_{\sigma}'(1_A) \cdot\mu(z)=\Xi(\sigma) \cdot z
$$
for  every $\sigma$-skew element $z\in A$, which is (3). By Lemma \ref{Ext-group-structure} (1), one has
 $\Upsilon_{\sigma\circ \tau}'= \Upsilon_\sigma'\circ \Upsilon_{\tau}'$  and so
$$
\Upsilon_{\sigma\circ \tau}'(1_A) = \sigma(\Upsilon_\tau'(1_A)) \cdot \Upsilon_\sigma'(1_A)
$$
for every pair of automorphisms  $\sigma,\, \tau\in \Aut_G(A)$. Taking inverse one obtains (1).  The last statement is a direct consequence of (2) and (3).
\end{proof}

\begin{remark}
Let $A$ be a connected  $\Z^r$-graded algebra. Suppose $A$ is Artin-Schelter regular (or weaker, Artin-Schelter Gorenstein \cite{JoZ}). We conjecture that in this case the map $\Xi: \Aut_{\Z^r}(A) \to U(A_0)= \mathbb{K}^\times$ in Proposition \ref{Nakayama-auto-behavior} is just  the homological determinant defined by 
J{\o}gensen and Zhang \cite{JoZ}. 
\end{remark}

Next, we focus on the behavior of the skew Calabi-Yau property with respect to regradings.
Let $\varphi:G\to H$  be a homomorphism of abelian groups. We denote by $\varphi^*(A)$ the $H$-graded algebra with the same underlying algebra as  $A$ and the  $H$-grading given by 
$$\varphi^*(A) :=\bigoplus\nolimits_{\delta\in H} \bigoplus\nolimits_{\gamma \in \varphi^{-1}(\delta)} A_\gamma.$$
The map $\varphi$ also induces a functor  $\varphi^*_A: \Mod^GA\to \Mod^H\varphi^*(A)$ as follows. For any  $M\in \Mod^GA$,
$$\varphi^*_A(M)=\bigoplus\nolimits_{\delta\in H} \bigoplus\nolimits_{\gamma \in \varphi^{-1}(\delta)} M_\gamma.$$
The underlying $A$-module structure of  $\varphi^*_A(M)$ is the same as that of $M$. The acting of $\varphi^*_A$ on the morphisms is just the identity maps. Clearly, $\varphi^*_A$ preserves projective resolutions.

\begin{proposition}\label{Calabi-Yau-regrading}
Let $A$ be a $G$-graded algebra and $\varphi:G\to H$ a group homomorphism. If $A$ is skew $G$-Calabi-Yau of dimension $d$ then $\varphi^*(A)$ is skew $H$-Calabi-Yau of dimension $d$; and in this case,
\begin{flalign*}
&& \mathfrak{l}_{\varphi^*(A),H} =\varphi(\mathfrak{l}_{A,G})\quad \text{ and } &  \quad   \mu_{\varphi^*(A),H}=\mu_{A,G}. & 
\end{flalign*}
\end{proposition}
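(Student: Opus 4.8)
The plan is to trace the hypothesis of Definition \ref{definition-CY} through the regrading functor $\varphi^*_A$ and its bimodule analogue, exploiting that $\varphi^*_A$ changes nothing but the grading group. First I would observe that the enveloping algebra of $\varphi^*(A)$ is canonically $\varphi^*(A^e)$: indeed $\varphi^*(A)^o = \varphi^*(A^o)$, and for the tensor product one checks directly from the definitions that $\varphi^*(A)\otimes \varphi^*(A^o) = (\varphi\times\varphi)^*(A\otimes A^o)$ refined along the addition map $H\times H\to H$, so that as an $H$-graded algebra $\varphi^*(A)^e$ coincides with $\varphi^*(A^e)$ via the identity on the underlying algebra. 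Consequently $\varphi^*_{A^e}:\Mod^GA^e\to \Mod^H\varphi^*(A)^e$ is a well-defined functor; it is exact, sends finitely generated modules to finitely generated modules, and preserves projective resolutions (a projective $G$-graded $A^e$-module is a summand of a free one, and $\varphi^*_{A^e}$ carries $\Sigma_\gamma A^e$ to $\Sigma_{\varphi(\gamma)}\varphi^*(A^e)$). Hence a bounded finitely generated projective resolution of $A$ in $\Mod^GA^e$ is carried to one of $\varphi^*(A)$ in $\Mod^H\varphi^*(A)^e$, which gives homological $H$-smoothness, i.e. condition (1).

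Next I would establish the key comparison
\[
\varphi^*_{A^e}\bigl(\uExt_{A^e}^{i,G}(A,A^e)\bigr)\;\cong\;\uExt_{\varphi^*(A)^e}^{i,H}\bigl(\varphi^*(A),\varphi^*(A)^e\bigr)
\]
as $H$-graded $\varphi^*(A)$-bimodules, functorially. Picking a bounded finitely generated projective resolution $P_\bullet\to A$ in $\Mod^GA^e$, the left-hand side is the cohomology of $\varphi^*_{A^e}\bigl(\uHom_{A^e}^G(P_\bullet,A^e)\bigr)$ while the right-hand side is the cohomology of $\uHom_{\varphi^*(A)^e}^H(\varphi^*_{A^e}(P_\bullet),\varphi^*(A^e))$; so it suffices to produce a natural isomorphism of complexes between these two, and since $\varphi^*_{A^e}$ is exact it commutes with taking cohomology. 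For $M$ finitely generated projective in $\Mod^GA^e$ one has a natural identification $\varphi^*_{A^e}\,\uHom_{A^e}^G(M,A^e)\cong \uHom_{\varphi^*(A)^e}^H(\varphi^*_{A^e}M,\varphi^*(A^e))$: both sides are, as plain bimodules, $\Hom_{A^e}(M,A^e)$, and one must check the two $H$-gradings agree, which reduces to the case $M=\Sigma_\gamma A^e$ where $\uHom_{A^e}^G(\Sigma_\gamma A^e,A^e)\cong \Sigma_{-\gamma}A^e$ on the left and $\uHom_{\varphi^*(A)^e}^H(\Sigma_{\varphi(\gamma)}\varphi^*(A^e),\varphi^*(A^e))\cong\Sigma_{-\varphi(\gamma)}\varphi^*(A^e)$ on the right, and these match since $\varphi^*_{A^e}(\Sigma_{-\gamma}A^e)=\Sigma_{-\varphi(\gamma)}\varphi^*(A^e)$. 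The naturality in $M$ then upgrades this to an isomorphism of complexes.

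Finally I would plug in the skew Calabi-Yau hypothesis for $A$: we have $\uExt_{A^e}^{i,G}(A,A^e)=0$ for $i\neq d$, which the exact functor $\varphi^*_{A^e}$ sends to $0$, and $\uExt_{A^e}^{d,G}(A,A^e)\cong \Sigma_{\mathfrak{l}_{A,G}}{}^1\!A^{\mu_{A,G}}$, so the comparison gives
\[
\uExt_{\varphi^*(A)^e}^{d,H}\bigl(\varphi^*(A),\varphi^*(A)^e\bigr)\;\cong\;\varphi^*_{A^e}\bigl(\Sigma_{\mathfrak{l}_{A,G}}{}^1\!A^{\mu_{A,G}}\bigr)\;=\;\Sigma_{\varphi(\mathfrak{l}_{A,G})}\,{}^1\!\varphi^*(A)^{\mu_{A,G}},
\]
the last equality because shifting by $\gamma$ in $G$ becomes shifting by $\varphi(\gamma)$ after applying $\varphi^*_{A^e}$, and because twisting by an automorphism $\mu\in\Aut_G(A)\subseteq\Aut_H(\varphi^*(A))$ is unchanged by regrading (the underlying bimodule operations are identical). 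This verifies condition (2) for $\varphi^*(A)$ with $d$, with Nakayama automorphism $\mu_{A,G}$ and Gorenstein parameter $\varphi(\mathfrak{l}_{A,G})$, as claimed. The main obstacle, such as it is, is purely bookkeeping: making the identification $\varphi^*(A)^e=\varphi^*(A^e)$ and the $\uHom$-$H$-grading comparison genuinely canonical and compatible with the bimodule structure on both sides, so that the automorphism $\mu_{A,G}$ really transports to the Nakayama automorphism of $\varphi^*(A)$ on the nose rather than only up to an inner twist.
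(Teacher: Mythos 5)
Your argument is correct and is essentially the paper's proof: the paper likewise identifies $\varphi^*(A)^e=\varphi^*(A^e)$, uses that $\varphi^*_{A^e}$ preserves finitely generated projective resolutions, and invokes the comparison $\uExt^{i,H}_{\varphi^*(A)^e}(\varphi^*(A),\varphi^*(A)^e)\cong\varphi^*_{A^e}(\uExt^{i,G}_{A^e}(A,A^e))$ as ``standard graded ring theory,'' which is exactly the isomorphism you verify by reducing to shifted free bimodules. Your write-up simply fills in the details the paper leaves implicit, so no further changes are needed.
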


\begin{proof}
Note that $\varphi^*(A)^e = \varphi^*(A^e)$ as $H$-graded algebras, and $A$ admits a projective resolution in $\Mod^GA^e$ with each term finitely generated. 
By standard graded ring theory, we have
 $$
\uExt_{\varphi^*(A)^e}^{i,H} (\varphi^*(A), \varphi^*(A)^e) \cong \varphi_{A^e}^*(\uExt_{A^e}^{i,G}(A,A^e))
$$
in $\Mod^H\varphi^*(A)^e$ for every integer $i\in \Z$. This gives the desired  result.
\end{proof}

\begin{remark}
We don't know whether $\varphi^*(A)$ is skew $H$-Calabi-Yau of dimension $d$ implies $A$ is skew $G$-Calabi-Yau of dimension $d$, even in the special case $G=\Z$ and $H=0$. However, through the AS-regularity, one may tell something other than Proposition  \ref{Calabi-Yau-regrading} on the behavior of the skew Calabi-Yau property with respect to regradings, see \cite[Theorem 2.1.6]{RZ} and \cite[Lemma 1.2]{ZL1} for references.
\end{remark}

\section{Skew Calabi-Yau property under normal extension}

In this section, we study the behavior of the skew Calabi-Yau property under normal extensions. In the connected $\Z^r$-graded case, it is equivalent to investigate the behavior of the AS-regularity under normal extensions, which is easy to handle. However, in the general situation, the AS-regularity plays no role and thereof one needs new ideas and new techniques.

The following notations are employed. We reserve $R$ to stand for a $G$-graded algebra on which a homogeneous regular normal non-invertible element $\hbar$ of degree $\varepsilon$ is specified; we write $\bar{R}:= R/\hbar R$ and consider it $G$-graded naturally; we define $\tau_\hbar\in \Aut_G(R)$  by $\hbar\cdot a ={{\tau_\hbar}}(a)\cdot\hbar$ for all $a\in R$.

Clearly, $\tau_\hbar$ induces an automorphism $\overline{\tau_\hbar} \in \Aut_G(\bar{R})$ because $\tau_\hbar(\hbar) =\hbar$. Due to Proposition \ref{Nakayama-auto-behavior}, if $R$ is skew $G$-Calabi-Yau of dimension $d$ then the Nakayama automorphism $\mu_{R,G} \in \Aut_G(R)$ will also induces an automorphism $\overline{\mu_{R,G}} \in \Aut_G(\bar{R})$. Generally, if $\sigma\in \Aut_G(R)$ sends $\hbar$ to $\xi \cdot \hbar$ for some $\xi\in U(R_0)$, then we define $\overline{\sigma}\in \Aut_G(\bar{R})$ to be the induced automorphism of $\sigma$.

The whole section is devoted to prove the following result.

\begin{theorem}\label{Calabi-Yau-normal-extension}
Suppose that $R$ is locally finite and there is a group homomorphism $p:G\to \Z$ such that $p(\supp R) \subseteq \N$ and $p(\varepsilon) >0$.  Then, if $\bar{R}$ is skew $G$-Calabi-Yau of dimension $d$, it follows that $R$ is skew $G$-Calabi-Yau of dimension $d+1$ with $$\mathfrak{l}_{\bar{R},G} =\mathfrak{l}_{R,G} - \varepsilon \quad \text{ and } \quad \mu_{\bar{R},G} = \overline{\mu_{R,G}}\circ \overline{\tau_\hbar}.$$
\end{theorem}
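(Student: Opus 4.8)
The plan is to produce an explicit finite projective resolution of $R$ as an $R^e$-module out of one for $\bar R$, and then to compute $\uExt_{R^e}^{\bullet,G}(R,R^e)$ from it. The starting point is the short exact sequence of $R$-bimodules
\begin{equation*}
0 \longrightarrow {}^1 R^{\tau_\hbar^{-1}}(-\varepsilon) \xrightarrow{\ \rho_\hbar\ } R \longrightarrow \bar R \longrightarrow 0,
\end{equation*}
which expresses that $\hbar$ is regular normal of degree $\varepsilon$; here the left-hand term is a free left $R$-module and its right action is twisted by $\tau_\hbar$ because $\hbar a = \tau_\hbar(a)\hbar$. First I would take a bounded, finitely-generated projective $R^e$-resolution $P_\bullet \to \bar R$ (which exists since $\bar R$ is homologically $G$-smooth, this being part of the skew Calabi-Yau hypothesis), and splice it with a chosen $R^e$-projective resolution of $R$ itself — or, more cleanly, build a resolution of $R$ over $R^e$ from the fact that $R^e = R\otimes R^o$ is free over itself — to obtain from the displayed sequence a finite projective $R^e$-resolution of $R$ of length $d+1$. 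Concretely one lifts $\rho_\hbar$ to a chain map and takes a mapping cone: the cone of $P_\bullet[-1] \to (\text{resolution of }R)$ realizing $\rho_\hbar$ gives $\bar R$, so rearranging gives a length-$(d+1)$ resolution of $R$. This step secures homological $G$-smoothness of $R$ in dimension $d+1$.

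Next I would apply $\uHom_{R^e}^G(-,R^e)$ to this resolution and chase the resulting long exact sequence of $\uExt^{\bullet,G}_{R^e}$-groups. The key computational input is the identification of $\uExt_{R^e}^{i,G}(\bar R, R^e)$ in terms of $\uExt_{R^e}^{i,G}(\bar R, \bar R^e)$: since $\bar R = R/\hbar R$, one has a change-of-rings spectral sequence, but here it collapses because $\hbar$ is a single regular normal element, yielding an isomorphism
\begin{equation*}
\uExt_{R^e}^{i,G}(\bar R, R^e) \cong \uExt_{\bar R^e}^{i-1,G}(\bar R, \bar R^e) \otimes_{\bar R^e}(\text{a shift/twist term}),
\end{equation*}
up to the appropriate degree shift by $\varepsilon$ and twist by $\overline{\tau_\hbar}$. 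Feeding the skew Calabi-Yau hypothesis for $\bar R$, namely $\uExt_{\bar R^e}^{i,G}(\bar R,\bar R^e)$ is $0$ for $i\neq d$ and $\cong \Sigma_{\mathfrak{l}_{\bar R,G}}{}^1\bar R^{\mu_{\bar R,G}}$ for $i=d$, the long exact sequence then forces $\uExt_{R^e}^{i,G}(R,R^e)=0$ for $i\neq d+1$ and leaves a single nonzero term in degree $d+1$; the connecting map involving $\rho_\hbar$ (equivalently $\lambda_\hbar$, using Lemma \ref{Ext-group-structure}) is what links the bimodule computed over $R$ to the one over $\bar R$.

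The remaining task is bookkeeping of the shift index and the automorphism twist. The Gorenstein-parameter identity $\mathfrak{l}_{R,G} = \mathfrak{l}_{\bar R,G}+\varepsilon$ comes out of tracking the $(-\varepsilon)$-shift in the displayed short exact sequence through the resolution and the $\Hom$ into $R^e$. For the Nakayama automorphism, I expect the twist to accumulate from two sources: the $\tau_\hbar$-twist already visible in ${}^1R^{\tau_\hbar^{-1}}(-\varepsilon)$, which upon dualizing contributes $\overline{\tau_\hbar}$, and the $\overline{\mu_{R,G}}$ coming from the skew Calabi-Yau structure of $\bar R$ viewed through the quotient — giving the composite $\mu_{\bar R,G} = \overline{\mu_{R,G}}\circ\overline{\tau_\hbar}$, read off by comparing left and right actions on the surviving one-dimensional-over-$\bar R$ bimodule. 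I anticipate the \textbf{main obstacle} to be exactly the careful, functorial identification of $\uExt_{R^e}^{i,G}(\bar R,R^e)$ with a shifted twisted copy of $\uExt_{\bar R^e}^{i-1,G}(\bar R,\bar R^e)$ as $\bar R$-bimodules — not merely as graded vector spaces — since the bimodule structure on the $\Ext$ over $R^e$ must be reconciled with the one over $\bar R^e$ while correctly carrying the $\tau_\hbar$-twist; the absence of noetherian hypotheses means one cannot invoke dualizing-complex machinery and must argue directly with the explicit resolution, and keeping the $A$-bimodule actions straight through the mapping cone and the $\Hom$-dualization is where the delicate signs and twists live. Proposition \ref{Nakayama-auto-behavior} will be used to guarantee that $\mu_{R,G}$ indeed descends to $\bar R$, i.e. that $\mu_{R,G}(\hbar)$ is a unit multiple of $\hbar$, which is what makes $\overline{\mu_{R,G}}$ meaningful in the first place.
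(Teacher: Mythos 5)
Your computation of the $\uExt$ groups follows the same general lines as the paper (a change-of-rings identification in the spirit of Rees' Lemma plus the long exact sequence induced by $0\to\Sigma_{-\varepsilon}{}^{1}R^{\tau_\hbar}\xrightarrow{\rho_\hbar}R\to\bar R\to0$), but there is a genuine gap at the very first step: you assert that $\bar R$ admits a bounded, finitely generated projective resolution \emph{over $R^e$} ``since $\bar R$ is homologically $G$-smooth.'' Homological smoothness of $\bar R$ only provides such a resolution over $\bar R^e$, and transporting it to $R^e$ is precisely the hard part of the theorem, not a consequence of the hypothesis. Your mapping-cone fix does not repair this: in the short exact sequence above, the first two terms are isomorphic up to twist and shift, so knowing (or assuming) finiteness data for $\bar R$ over $R^e$ gives no information distinguishing ``$R$ is strongly pseudo-coherent over $R^e$'' from its negation, and invoking ``a chosen $R^e$-projective resolution of $R$ itself'' is circular, since boundedness and finite generation of such a resolution is exactly what homological $G$-smoothness of $R$ asks for. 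The paper handles this in Proposition \ref{homo-smooth-normal-extension} via two lifting lemmas (Lemmas \ref{pseudo-coherent-lift-torsion} and \ref{pseudo-coherent-lift-torsionfree}): one lifts pseudo-coherence from $\bar R^e$ to $R\otimes\bar R^o$, and the other lifts finitely generated projectives along $R^e\to R\otimes\bar R^o$ by lifting idempotents in a complete ($\hbar$-adically) topological endomorphism algebra (Sj\"odin's lemma), then builds the resolution of $R$ term by term; this is where the hypotheses $p(\supp R)\subseteq\N$, $p(\varepsilon)>0$ enter through $\hbar$-discreteness, and it is the step your proposal skips entirely.

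Two further points are underdeveloped. First, the long exact sequence does not by itself ``force'' $\uExt_{R^e}^{i,G}(R,R^e)=0$ for $i\neq d+1$; it only shows that multiplication by $\hbar$ is bijective on these groups, and one must then argue that they are $\hbar$-discrete (again using local finiteness and the positivity of $p$ on $\supp R$ and $\varepsilon$) to conclude vanishing by a graded Nakayama-type argument --- your write-up never indicates where these hypotheses are used. Second, the Snake-Lemma comparison only identifies $\uExt_{R^e}^{d+1,G}(R,R^e)$ with $\Sigma_{\mathfrak{l}+\varepsilon}R$ as a left module and as a right module separately; to upgrade this to ${}^1R^{\nu}$ for an automorphism $\nu$ one needs an argument such as \cite[Lemma 2.9]{MM}, and the Nakayama identity $\mu_{\bar R,G}=\overline{\mu_{R,G}}\circ\overline{\tau_\hbar}$ is then cleanest to obtain by the descent statement (Proposition \ref{Nakayama-auto-reduce}) applied after $R$ is known to be skew Calabi--Yau, together with the freedom to adjust $\mu_{R,G}$ by an inner automorphism; your ``bookkeeping of twists through the dualized cone'' gestures at this but does not supply the bimodule-level identification it requires. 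By contrast, the worry you single out as the main obstacle (the bimodule-functorial form of the change-of-rings isomorphism) is the comparatively routine part, handled by the collapsing spectral sequence in Lemma \ref{Rees-lemma-regular}.
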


The proof of this theorem will be addressed  after some preparatory results.  In the following example, we deal with
the quantum affine space as a first toy application of this theorem. 

\begin{example}
Let $n\geq1$ be a positive integer and ${\bf q}=[q_{ij}]_{n\times n}$ be a matrix over $\mathbb{K}$ with entries satisfying $q_{ii}=1$ and $q_{ij}q_{ji}=1$ for all $1\leq i,\, j \leq n$.
The quantum affine $n$-space $A=\mathcal{O}_{\bf q}(\mathbb{K}^n)$ is the algebra generated over  $\mathbb{K}$ by  generators $x_1,\cdots, x_n$ with relations:
$$
x_jx_i= q_{ij}x_ix_j,  \quad 1\leq i,\, j\leq n.
$$
It is known that $A$ is skew Calabi-Yau of dimension $n$ with Nakayama automorphism $\mu_A$ satisfies
$$\mu_A: x_i \mapsto (\Pi_{r=1}^n q_{ri})x_i, \quad i=1,\cdots, n,$$  see \cite[Proposition 4.1]{LWW} and \cite[Example 5.5]{RRZ}.
We regain this fact as follows from a different point of view.  First, introduce a $\Z$-grading on $A$ by putting $\deg(x_s)=1$ for all $s=1,\cdots, n$.  It is easy to see  that  the induced map $\tau_{x_s}\in \Aut_{\Z}(A)$ is given by $x_i\mapsto q_{is}x_i$. Let ${\bf q}'$ (resp. ${\bf q}''$) be the matrix obtained by deleting the last row and the last column (resp. first row and first column) of ${\bf q}$.
Then $ A/(x_n)\cong A':= \mathcal{O}_{{\bf q}'}(\mathbb{K}^{n-1})$ and $A/(x_1)\cong A'':=\mathcal{O}_{{\bf q}''}(\mathbb{K}^{n-1})$ as $\Z$-graded algebras. By induction, we know $A'$, $A''$ are skew $\Z$-Calabi-Yau of dimension $n-1$, and their  Nakayama automorphisms satisfy
\begin{eqnarray*}
\mu_{A'}: &x_i\mapsto (\Pi_{r=1}^{n-1} q_{ri})x_i, &\quad 1\leq i\leq n-1,\\
\mu_{A''}: &x_i \mapsto (\Pi_{r=2}^n q_{ri}) x_i, &\quad 2\leq i\leq n.
\end{eqnarray*}
respectively. Now apply Theorem \ref{Calabi-Yau-normal-extension} and Proposition \ref{Calabi-Yau-regrading}, the desired result follows immediately.
\end{example}

Next, let us proceed to show Theorem \ref{Calabi-Yau-normal-extension}.   Among all, one shall lift the homological smoothness under normal extensions, which  turns out to be
the most troublesome step of our demonstration.
For the fluency of reading,  we postpone the proof of the next proposition to the end of this section.

\begin{proposition}\label{homo-smooth-normal-extension}
Suppose that there is a group homomorphism $p:G\to \Z$ such that  $p(\supp R) \subseteq \N$ and $p(\varepsilon) >0$. Then, 
if $\bar{R}$ is homologically $G$-smooth, it follows that  $R$ is too.
\end{proposition}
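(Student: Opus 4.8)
The plan is to deduce homological $G$-smoothness of $R$ from a local-type criterion obtained by killing a single regular normal element of positive degree. Set $A=R^e$, $\eta=\hbar\otimes 1\in A$ and $\bar A=A/\eta A$. Since $\hbar$ is homogeneous, regular and normal in $R$, the element $\eta$ is homogeneous, regular and normal in $A$, of $p$-degree $p(\varepsilon)>0$, and $\bar A=\bar R\otimes R^o$. Regard the diagonal bimodule $R$ as a left $A$-module; it is cyclic, $\eta$ acts on it as the injective map $\lambda_\hbar$, and $R/\eta R=R/\hbar R=\bar R$, so $\bar A\otimes^{\mathbb L}_A R\simeq\bar R$ in $\Mod^G\bar A$, with $\bar R$ carrying its natural structure of left $\bar R$-, right $R$-module. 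I would first note that this object is perfect over $\bar A$: the module $\bar R$ has the length-one resolution $0\to R(-\varepsilon)\to R\to\bar R\to 0$ by finitely generated graded free right $R$-modules, so $\bar R^e=\bar R\otimes\bar R^o$ is perfect over $\bar R\otimes R^o=\bar A$; as $\bar R$ is perfect over $\bar R^e$ by hypothesis, it follows that $\bar R$ is perfect over $\bar A$. Thus the proposition follows from the Key Lemma below, applied with $A,\eta,\bar A$ as above and $M=R$.

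\textbf{Key Lemma.} \emph{Let $A$ be a $G$-graded algebra admitting a homomorphism $p\colon G\to\Z$ with $p(\supp A)\subseteq\N$, let $\eta\in A$ be homogeneous, regular and normal with $p(\deg\eta)>0$, and put $\bar A=A/\eta A$. If $M$ is a finitely generated graded $A$-module on which $\lambda_\eta$ is injective and $M/\eta M$ is perfect over $\bar A$, then $M$ is perfect over $A$.}

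Establishing this lemma is the technical heart, and the whole difficulty is that neither noetherianness nor coherence is assumed, so all finiteness must be extracted from the positive grading alone. Two tools do that. First, the graded Nakayama lemma: a $p$-bounded-below graded $A$-module $N$ with $N=\eta N$ vanishes, and $N$ is finitely generated over $A$ whenever $N/\eta N$ is finitely generated over $\bar A$ --- both shown by looking at a homogeneous nonzero element of minimal $p$-degree, using that $\eta$ strictly raises $p$-degree. Second, finitely generated graded projective $A$-modules (and their endomorphism rings) are graded $\eta$-adically complete, since in each fixed degree $\eta^n$ eventually contributes nothing. Granting these, I would argue as follows. Because $M/\eta M$ is perfect, the syzygies in any resolution of it by finitely generated graded projective $\bar A$-modules are again finitely generated; reducing a graded projective resolution $F_\bullet\to M$ modulo $\eta$ yields such a resolution of $M/\eta M$ (it is exact because $\Tor^A_{>0}(\bar A,M)=0$, as $\bar A$ has projective dimension $\le 1$ over $A$ and $\lambda_\eta$ is injective on $M$), so that, inductively, each syzygy $Z_i=\ker(F_{i-1}\to F_{i-2})$ satisfies $Z_i/\eta Z_i\cong\Omega^i_{\bar A}(M/\eta M)$, which is finitely generated over $\bar A$; hence $Z_i$ is finitely generated over $A$ by graded Nakayama and admits a finitely generated free cover $F_i$. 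Taking $i$ at least $\pdim_{\bar A}(M/\eta M)$, the module $Z_i/\eta Z_i$ is finitely generated \emph{projective} over $\bar A$, which reduces the problem to the base case: a finitely generated graded $A$-module $N$ with $\lambda_\eta$ injective and $N/\eta N$ finitely generated projective over $\bar A$ is finitely generated projective over $A$. For this, lift an idempotent exhibiting $N/\eta N$ as a summand of a free $\bar A$-module to an idempotent on a finitely generated free $A$-module (possible by the graded $\eta$-adic completeness, via the usual Newton iteration), and identify $N$ with the resulting summand using graded Nakayama to annihilate the cokernel and the kernel of the comparison map (the kernel because $\Tor^A_1(\bar A,N)=N[\eta]=0$, as $\lambda_\eta$ is injective on $N$). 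Feeding $N=Z_i$ back in, $Z_i$ is finitely generated projective, so $0\to Z_i\to F_{i-1}\to\cdots\to F_0\to M\to 0$ is a finite resolution of $M$ by finitely generated projective $A$-modules.

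The step I expect to be the main obstacle is exactly this base case: promoting ``$N$ becomes finitely generated projective after killing $\eta$'' to ``$N$ is genuinely finitely generated projective'' --- that is, obtaining an honestly finite resolution rather than one that is finite only up to an infinite contractible tail --- in the absence of any coherence hypothesis. It is the single place where the positive grading is used in an essential way, through graded Nakayama and $\eta$-adic idempotent lifting.
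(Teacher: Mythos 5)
Your proposal is correct and follows essentially the same route as the paper: the same reduction to the pair $(R^e,\hbar\otimes 1)$ (the paper uses the mirror-image $1\otimes\hbar^o$), the same transfer of the smoothness of $\bar R$ over $\bar R^e$ to perfectness over the mixed algebra via the length-one free resolution of $\bar R$ over $R$, and the same two technical pillars in the key lifting lemma, namely graded Nakayama extracted from the positivity of $p$ and idempotent lifting in the $\eta$-adically complete endomorphism ring of a finitely generated graded projective (the paper's Lemmas 3.8 and 3.9, the latter citing Sj\"odin for the idempotent lifting). The only organizational difference is that the paper lifts an entire finitely generated projective resolution from the quotient term by term, whereas you build the resolution over $R^e$ directly, control its syzygies by reduction modulo $\eta$, and invoke idempotent lifting just once at the truncation stage; this changes nothing of substance.
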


In the sequel, one shall bear in mind the following observations: for a $G$-graded algebra $A$, the homogeneous element  $\hbar\otimes 1_{A}$ (resp. $1_A\otimes \hbar^o$) of $R\otimes A$ (resp. $A\otimes R^o$) is  regular, normal and non-invertible. So  results stated for the pair $(R,\hbar)$ may be applied literally to the pair $(R\otimes A, \hbar\otimes1_A) $ (resp. $(A\otimes R^o, 1_A\otimes \hbar^o)$). Also, note that $\tau_{\hbar\otimes 1_A} = \tau_\hbar\otimes \id_A$ and $\tau_{1_A\otimes \hbar^o} = \id_A\otimes \tau_\hbar^{-1}$.

\begin{lemma}[Rees' Lemma]\label{Rees-lemma-regular}
There is a natural isomorphism 
$$\uExt_{\bar{R}}^{i,G}(L,\bar{R}) \cong \Sigma_{-\varepsilon}\uExt_R^{i+1,G}(L,R)^{{\tau_\hbar}}$$ in $\Mod^GR^o$ for every integer $i\in \Z$ and every module $L\in \Mod^G\bar{R}$.
\end{lemma}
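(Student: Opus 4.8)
The plan is to derive this as an instance of the classical Rees Lemma (change-of-rings spectral sequence for a regular normal element), applied in a graded setting, with careful bookkeeping of the shift $\Sigma_{-\varepsilon}$ and the twist by $\tau_\hbar$. Write $R$ for the graded algebra and view $L\in \Mod^G\bar R$ as a graded left $R$-module via the projection $R\to\bar R$. The starting point is the short exact sequence of graded $R$-bimodules
\begin{equation*}
0 \longrightarrow \Sigma_{-\varepsilon}\,{}^{1}\!R^{\tau_\hbar} \xrightarrow{\;\rho_\hbar\;} R \longrightarrow \bar R \longrightarrow 0,
\end{equation*}
where the first map is right multiplication by $\hbar$; it is well-defined and graded precisely because $\hbar$ is regular (injectivity), homogeneous of degree $\varepsilon$ (hence the shift), and normal with $\hbar a=\tau_\hbar(a)\hbar$ (hence the left-module structure is twisted by $\tau_\hbar$).

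First I would apply $\uRHom_R(L,-)$ — equivalently compute $\uExt_R^{*,G}(L,-)$ — to this short exact sequence, obtaining a long exact sequence
\begin{equation*}
\cdots \to \uExt_R^{i,G}(L,R) \to \uExt_R^{i,G}(L,\bar R) \xrightarrow{\;\delta\;} \Sigma_{-\varepsilon}\,\uExt_R^{i+1,G}(L,R)^{\tau_\hbar} \xrightarrow{\;\rho_\hbar\;} \uExt_R^{i+1,G}(L,R) \to \cdots
\end{equation*}
in $\Mod^G R^o$ (the right $R$-module structures coming from the right $R$-bimodule structure on $R$ and on ${}^1R^{\tau_\hbar}$). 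Next I would identify the middle term: since $L$ is actually an $\bar R$-module and $\hbar$ acts as zero on it, the map $\rho_\hbar$ on $\uExt_R^{i,G}(L,R)$ — induced by right multiplication by $\hbar$ on the target $R$ — can be compared, via a change-of-rings argument, with left multiplication by $\hbar$, which is zero. Concretely, take a graded projective resolution $P_\bullet\to L$ over $R$; then $\rho_\hbar$ on $\Hom_R(P_\bullet,R)$ is chain-homotopic to the map induced by $\lambda_\hbar$ on $P_\bullet$ up to the normalizing twist $\tau_\hbar$, and $\lambda_\hbar$ is null-homotopic on $P_\bullet$ because $\hbar L=0$ (lift the null-homotopy of $\cdot\hbar$ on $L$ through the projectivity of the $P_i$). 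Hence every connecting map $\rho_\hbar$ in the long exact sequence vanishes, and the long exact sequence breaks into the short exact sequences
\begin{equation*}
0 \to \uExt_R^{i,G}(L,R) \to \uExt_R^{i,G}(L,\bar R) \xrightarrow{\;\delta\;} \Sigma_{-\varepsilon}\,\uExt_R^{i+1,G}(L,R)^{\tau_\hbar} \to 0.
\end{equation*}
Finally, I would observe $\uExt_R^{i,G}(L,R)\cong\uExt_R^{i,G}(L,\bar R)$ fails in general, so instead I compare $\uExt_R^{*,G}(L,R)$ with $\uExt_{\bar R}^{*,G}(L,\bar R)$ directly: replacing the first term using the same vanishing argument (the inclusion $\uExt_R^{i,G}(L,R)\hookrightarrow\uExt_R^{i,G}(L,\bar R)$ has image the kernel of $\delta$, while a parallel change-of-rings isomorphism identifies $\uExt_R^{i,G}(L,\bar R)$ with $\uExt_{\bar R}^{i,G}(L,\bar R)\oplus(\text{shifted piece})$) one extracts the stated isomorphism $\uExt_{\bar R}^{i,G}(L,\bar R)\cong \Sigma_{-\varepsilon}\uExt_R^{i+1,G}(L,R)^{\tau_\hbar}$. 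The cleanest route, which I would actually write up, uses the change-of-rings spectral sequence $\uExt_{\bar R}^{p,G}(L,\uExt_R^{q,G}(\bar R,R)) \Rightarrow \uExt_R^{p+q,G}(L,R)$ together with the degenerate computation $\uExt_R^{q,G}(\bar R,R)\cong \Sigma_{-\varepsilon}\bar R^{\tau_\hbar}$ for $q=1$ and $0$ otherwise (itself immediate from the two-term resolution $0\to\Sigma_{-\varepsilon}{}^1R^{\tau_\hbar}\to R\to\bar R\to 0$), which forces the spectral sequence to collapse and yields $\uExt_R^{i+1,G}(L,R)\cong \uExt_{\bar R}^{i,G}(L,\Sigma_{-\varepsilon}\bar R^{\tau_\hbar})\cong \Sigma_{-\varepsilon}\uExt_{\bar R}^{i,G}(L,\bar R)^{\tau_\hbar}$; rearranging gives the claim.

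The main obstacle is bookkeeping rather than conceptual: tracking the shift $\Sigma_{-\varepsilon}$ and, above all, the twist by $\tau_\hbar$ consistently through the change-of-rings machinery and verifying that the resulting isomorphism is one of \emph{right} $R$-modules (note $\tau_\hbar$ restricts to the identity on $\bar R$ only after passing to $\bar R$, so on $\Mod^G R^o$ the twist genuinely survives) and that it is natural in $L$. I would also need to check that the spectral sequence exists in the graded category $\Mod^G R$ with the finiteness/boundedness one has here — this is routine since all modules are $G$-graded and the relevant resolutions can be taken graded — and that the edge maps carry the bimodule structures claimed. Everything else is the standard homological algebra of a regular normal element.
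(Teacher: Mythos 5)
Your ``cleanest route'' is precisely the paper's proof: the change-of-rings spectral sequence $\uExt_{\bar R}^{p,G}(L,\uExt_R^{q,G}(\bar R,N))\Rightarrow\uExt_R^{p+q,G}(L,N)$ collapses because $\uExt_R^{q,G}(\bar R,N)$ is concentrated in $q=1$, computed from the two-term resolution; the preliminary long-exact-sequence discussion (and the vague ``$\uExt_{\bar R}^{i,G}(L,\bar R)\oplus(\text{shifted piece})$'' claim) is unnecessary and should be dropped. The one substantive issue is exactly the bookkeeping you flag: from $0\to\Sigma_{-\varepsilon}{}^1R^{\tau_\hbar}\xrightarrow{\rho_\hbar}R\to\bar R\to 0$ one gets $\uExt_R^{1,G}(\bar R,R)\cong\Sigma_{\varepsilon}\,{}^{\overline{\tau_\hbar}}\bar R^{1}\cong\Sigma_{\varepsilon}\,{}^{1}\bar R^{\overline{\tau_\hbar}^{-1}}$, not $\Sigma_{-\varepsilon}\bar R^{\tau_\hbar}$; taking your displayed chain at face value and ``rearranging'' would yield $\uExt_{\bar R}^{i,G}(L,\bar R)\cong\Sigma_{\varepsilon}\uExt_R^{i+1,G}(L,R)^{\tau_\hbar^{-1}}$, which differs from the lemma by the sign of the shift and an inverse twist. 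With the corrected identification the same collapse does give the statement, after moving the left twist on the coefficient into the second slot of $\uExt_{\bar R}^{i,G}(L,-)$ and transposing. Note that the paper sidesteps this juggling by running the spectral sequence with the pre-shifted, pre-twisted coefficient $N=\Sigma_{-\varepsilon}{}^{\tau_\hbar^{-1}}\!R^{1}$, for which $\uExt_R^{1,G}(\bar R,N)\cong\bar R$ as a bimodule on the nose and the abutment is $\Sigma_{-\varepsilon}\uExt_R^{n,G}(L,R)^{\tau_\hbar}$ — you may want to adopt that normalization, since it makes the $E_2$-page literally $\uExt_{\bar R}^{p,G}(L,\bar R)$ and renders the shift/twist tracking automatic.
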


\begin{proof}
It follows from the collapsing of the following spectral sequence in $\Mod^GR^o$:
$$\uExt_{\bar{R}}^{p,G}(L, \uExt_{R}^{q,G}(\bar{R}, \Sigma_{-\varepsilon}{}^{{{\tau_\hbar}}^{-1}}\!R^1)) \, \Longrightarrow \, \uExt_{R}^{n,G}(L, \Sigma_{-\varepsilon}{}^{{{\tau_\hbar}}^{-1}}\!R^1)\cong \Sigma_{-\varepsilon}\uExt_{R}^{n,G}(L, R\,)^{{{\tau_\hbar}}}.$$
Note that $\uExt_{R}^{q,G}(\bar{R}, \Sigma_{-\varepsilon}{}^{{{\tau_\hbar}}^{-1}}\!R^1)=0$ for $q\neq 1$ and  $\uExt_R^{1,G}(\bar{R}, \Sigma_{-\varepsilon}{}^{{{\tau_\hbar}}^{-1}}\!R^1)\cong \bar{R}$ in $\Mod^GR\otimes R^o$.
\end{proof}

\begin{lemma}\label{long-exact-sequence-regular}
Assume $M\in \Mod^GR$ is $\hbar$-torsionfree, that is, $\hbar$ annihilates no nonzero element of $M$. Then there is a long exact sequence in $\Mod^GR^{\rm o}$ of the form:
$$
\cdots \to \uExt_{\bar{R}}^{i-1,G}(\bar{R}\otimes_R M, \bar{R})   \to \Sigma_{-\varepsilon}\uExt_R^{i,G}(M,R)^{{{\tau_\hbar}}} \xrightarrow{\rho_{\hbar}} \uExt_R^{i,G}(M,R) \to \uExt_{\bar{R}}^{i,G}(\bar{R}\otimes_R M, \bar{R})  \to   \cdots.
$$
\end{lemma}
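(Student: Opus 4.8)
The statement to prove is Lemma~\ref{long-exact-sequence-regular}: given an $\hbar$-torsionfree module $M\in\Mod^GR$, there is a long exact sequence
$$
\cdots \to \uExt_{\bar R}^{i-1,G}(\bar R\otimes_R M,\bar R) \to \Sigma_{-\varepsilon}\uExt_R^{i,G}(M,R)^{\tau_\hbar} \xrightarrow{\rho_\hbar} \uExt_R^{i,G}(M,R) \to \uExt_{\bar R}^{i,G}(\bar R\otimes_R M,\bar R) \to \cdots
$$
in $\Mod^GR^{\rm o}$. The idea is to start from the short exact sequence of graded $R$-bimodules
$$
0 \to \Sigma_{-\varepsilon}{}^{{\tau_\hbar}^{-1}}\!R^1 \xrightarrow{\rho_\hbar} R \to \bar R \to 0,
$$
where the first map is right multiplication by $\hbar$ (this is injective since $\hbar$ is regular, and the twist $\Sigma_{-\varepsilon}{}^{{\tau_\hbar}^{-1}}\!R^1$ is exactly what is needed to make $\rho_\hbar$ a morphism of graded bimodules, because $\hbar a = \tau_\hbar(a)\hbar$ and $\hbar$ has degree $\varepsilon$). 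Since $M$ is $\hbar$-torsionfree, the same sequence stays exact after applying $-\otimes_R M$, giving a short exact sequence $0\to \Sigma_{-\varepsilon}{}^{{\tau_\hbar}^{-1}}\!M \to M \to \bar R\otimes_R M\to 0$ in $\Mod^GR$; but it is cleaner to work on the coefficient side. Applying $\uHom_R^G(M,-)$ to the bimodule short exact sequence and taking the long exact sequence of graded $\uExt$, one gets
$$
\cdots \to \uExt_R^{i,G}(M,\Sigma_{-\varepsilon}{}^{{\tau_\hbar}^{-1}}\!R^1) \xrightarrow{(\rho_\hbar)_*} \uExt_R^{i,G}(M,R) \to \uExt_R^{i,G}(M,\bar R) \to \uExt_R^{i+1,G}(M,\Sigma_{-\varepsilon}{}^{{\tau_\hbar}^{-1}}\!R^1) \to \cdots
$$
as graded right $R$-modules (the right module structure comes from the right $R$-action on $R$, $\bar R$, and the twisted bimodule; note $\uExt_R^{i,G}(M,\Sigma_{-\varepsilon}{}^{{\tau_\hbar}^{-1}}\!R^1)\cong \Sigma_{-\varepsilon}\uExt_R^{i,G}(M,R)^{\tau_\hbar}$, which is where the twist lands on the right side, and under this identification the connecting map on coefficients $\rho_\hbar\colon \Sigma_{-\varepsilon}{}^{{\tau_\hbar}^{-1}}\!R^1\to R$ induces precisely the map labelled $\rho_\hbar$ in the statement).

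Next I would identify the third term $\uExt_R^{i,G}(M,\bar R)$ with $\uExt_{\bar R}^{i,G}(\bar R\otimes_R M,\bar R)$. This is a change-of-rings (adjunction) computation: $\bar R\otimes_R M$ is the pushforward of $M$ along $R\to\bar R$, and one wants the Grothendieck spectral sequence / base-change isomorphism relating $\uExt$ over $R$ with coefficients in a $\bar R$-module to $\uExt$ over $\bar R$. The clean way is to observe that since $\hbar$ is regular and $M$ is $\hbar$-torsionfree, $\mathrm{Tor}_1^R(\bar R,M)=0$, so $\bar R$ has projective dimension $\le 1$ "relative to $M$" in the appropriate sense; more precisely, one resolves $M$ by graded projective $R$-modules $P_\bullet$, and because $M$ is $\hbar$-torsionfree each $P_j$ maps to $P_{j-1}$ with $\hbar$-torsionfree image, so $\bar R\otimes_R P_\bullet$ is a resolution of $\bar R\otimes_R M$ by graded projective $\bar R$-modules. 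Then $\uHom_R^G(P_\bullet,\bar R)=\uHom_{\bar R}^G(\bar R\otimes_R P_\bullet,\bar R)$ by the standard adjunction, and taking cohomology gives $\uExt_R^{i,G}(M,\bar R)\cong\uExt_{\bar R}^{i,G}(\bar R\otimes_R M,\bar R)$ as graded right $R$-modules (equivalently right $\bar R$-modules, since the $R$-action factors through $\bar R$). Substituting this identification into the long exact sequence above yields exactly the asserted sequence, with the indexing on the $\bar R$-term being $\uExt_{\bar R}^{i,G}$ in the spot between $\uExt_R^{i,G}(M,R)$ and $\uExt_R^{i+1,G}(M,\Sigma_{-\varepsilon}{}^{{\tau_\hbar}^{-1}}\!R^1)=\Sigma_{-\varepsilon}\uExt_R^{i+1,G}(M,R)^{\tau_\hbar}$.

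**Main obstacle.** The genuinely delicate points are bookkeeping rather than conceptual. First, one must track the graded-bimodule twists carefully so that every arrow is a morphism in $\Mod^GR^{\rm o}$ with the stated shifts and $\tau_\hbar$-twists, and in particular verify that the connecting homomorphism of the coefficient short exact sequence really is $\rho_\hbar$ after the identification $\uExt_R^{i,G}(M,\Sigma_{-\varepsilon}{}^{{\tau_\hbar}^{-1}}\!R^1)\cong \Sigma_{-\varepsilon}\uExt_R^{i,G}(M,R)^{\tau_\hbar}$ — this is the compatibility of the twist on the module side with the twist on the $\uExt$ side, and is essentially the content packaged in Rees' Lemma (Lemma~\ref{Rees-lemma-regular}), so I would try to quote or mirror that argument. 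Second, the change-of-rings isomorphism must be shown to be compatible with the $\Mod^GR^{\rm o}$-structure and natural enough to commute with the long exact sequence; the $\hbar$-torsionfreeness of $M$ is precisely what guarantees $\bar R\otimes_RP_\bullet$ is still a resolution, and this hypothesis cannot be dropped. Everything else — left-exactness, the snake lemma, naturality of the long exact $\uExt$ sequence over a graded ring — is standard homological algebra applied componentwise over the grading group $G$.
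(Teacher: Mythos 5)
Your argument is correct, but it runs ``dual'' to the paper's proof, so a comparison is worthwhile. The paper applies $\uHom_R^G(-,R)$ to the short exact sequence of graded left modules $0 \to \Sigma_{-\varepsilon}{}^{\tau_\hbar^{-1}}\!M \xrightarrow{\lambda_\hbar} M \to \bar R\otimes_R M \to 0$ (this is where it consumes $\hbar$-torsionfreeness), then uses a four-row commutative diagram to convert the induced map $(\lambda_\hbar)^*$ into $\rho_\hbar$, and finally invokes Rees' Lemma (Lemma \ref{Rees-lemma-regular}), i.e.\ a first-variable change of rings with a degree shift by one, to rewrite the outer terms $\uExt_R^{i,G}(\bar R\otimes_R M,R)$ as the $\uExt_{\bar R}^{i-1,G}(\bar R\otimes_R M,\bar R)$ appearing in the statement. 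You instead fix $M$ and apply $\uHom_R^G(M,-)$ to the coefficient sequence built from multiplication by $\hbar$ on $R$, so the middle arrow is literally $(\rho_\hbar)_*=\rho_\hbar$ with no diagram chase, and you pay instead with a degree-preserving second-variable base change $\uExt_R^{i,G}(M,\bar R)\cong\uExt_{\bar R}^{i,G}(\bar R\otimes_R M,\bar R)$, which is where torsionfreeness enters for you: it gives $\Tor^R_{j}(\bar R,M)=0$ for $j>0$ (using that $\bar R$ has graded projective dimension one on the right), hence $\bar R\otimes_R P^\bullet$ is still a projective resolution; your syzygy remark is fine since graded submodules of graded projectives are $\hbar$-torsionfree. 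Both decompositions are legitimate: the paper's keeps the coefficient equal to the bimodule $R$ throughout, so the right $R$-structure stays transparent and Rees' Lemma is reused elsewhere (Proposition \ref{Nakayama-auto-reduce}, Theorem \ref{Calabi-Yau-normal-extension}), while yours isolates the $\rho_\hbar$ arrow at the cost of the extra change-of-rings verification. One bookkeeping slip to fix: with the left twist ${}^{\tau_\hbar^{-1}}\!R^1$ the bimodule morphism given by multiplication by $\hbar$ is $\lambda_\hbar$, not $\rho_\hbar$; for $\rho_\hbar$ the correct source is $\Sigma_{-\varepsilon}{}^1\!R^{\tau_\hbar}$, since $\rho_\hbar(x\ast a)=x\tau_\hbar(a)\hbar=x\hbar a=\rho_\hbar(x)a$, whereas with ${}^{\tau_\hbar^{-1}}\!R^1$ neither the left nor the right action is respected. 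Because ${}^{\tau_\hbar^{-1}}\!R^1\cong{}^1\!R^{\tau_\hbar}$ in $\Mod^GR^e$ via $\tau_\hbar$, under which $\lambda_\hbar$ corresponds to $\rho_\hbar$, your identification $\uExt_R^{i,G}(M,\Sigma_{-\varepsilon}{}^{\tau_\hbar^{-1}}\!R^1)\cong\Sigma_{-\varepsilon}\uExt_R^{i,G}(M,R)^{\tau_\hbar}$ and the resulting arrow are still the stated ones, so the slip is harmless once the twist is written correctly.
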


\begin{proof}
Consider the following commutative diagram in $\Mod^GR^o$:
{\small$$
\begin{array}{ccccccccccc}
\cdots \!\! & \!\! \to\!\!  & \!\!  \uExt_R^{i,G}(\bar{R}\otimes_R M,R)\!\!  & \!\! \to \!\!  & \!\! \uExt_R^{i,G}(M, R) \!\! & \!\! \xrightarrow{(\lambda_\hbar)^*} \!\!  & \!\!  \uExt_R^{i,G}(\Sigma_{-\varepsilon} {}^{{{\tau_\hbar}}^{-1}}\! M,R) \!\!  & \!\!  \to \!\!  & \!\!  \uExt_R^{i+1,G}(\bar{R}\otimes_R M,R) \!\! & \!\!  \to \!\!   & \!\!  \cdots \\
&&\!\! \downarrow = \!\!& &\!\! \downarrow = \!\!&&\!\! \downarrow \cong \!\!&&\!\! \downarrow = \!\!&& \\
\cdots\!\! & \!\! \to\!\!  & \!\!  \uExt_R^{i,G}(\bar{R}\otimes_R M,R)\!\!  & \!\! \to \!\!  & \!\! \uExt_R^{i,G}(M, R) \!\! & \!\! \xrightarrow{(\lambda_\hbar)_*} \!\!  & \!\!  \uExt_R^{i,G}(M,\Sigma_{\varepsilon} {}^{{{\tau_\hbar}}}\! R) \!\!  & \!\!  \to \!\!  & \!\!  \uExt_R^{i+1,G}(\bar{R}\otimes_R M,R) \!\! & \!\!  \to \!\!   & \!\!  \cdots \\
&&\!\!  \downarrow = \!\! & & \!\!  \downarrow = \!\! && \quad \,\,\, \downarrow({{\tau_\hbar}}^{-1})_*  \!\!&& \!\!  \downarrow =  && \\
\cdots\!\! & \!\! \to\!\!  & \!\!  \uExt_R^{i,G}(\bar{R}\otimes_R M,R)\!\!  & \!\! \to \!\!  & \!\! \uExt_R^{i,G}(M, R) \!\! & \!\! \xrightarrow{(\rho_\hbar)_*} \!\!  & \!\!  \uExt_R^{i,G}(M,\Sigma_{\varepsilon}\, R^{{{\tau_\hbar}}^{-1}}) \!\!  & \!\!  \to \!\!  & \!\!  \uExt_R^{i+1,G}(\bar{R}\otimes_R M,R) \!\! & \!\!  \to \!\!   & \!\!  \cdots\\
&&\!\! \downarrow = \!\!& &\!\! \downarrow = \!\!&&\!\! \downarrow \cong  \!\!&&\!\! \downarrow = \!\!&& \\
\cdots\!\! & \!\! \to\!\!  & \!\!   \uExt_R^{i,G}(\bar{R}\otimes_R M,R)\!\!  & \!\! \to \!\!  & \!\!  \uExt_R^{i,G}(M,R) \!\! & \!\! \xrightarrow{\rho_\hbar} \!\!  & \!\!  \Sigma_{\varepsilon}\uExt_R^{i,G}(M,R)^{{{\tau_\hbar}}^{-1}} \!\!  & \!\!  \to \!\!  & \!\!  \uExt_R^{i+1,G}(\bar{R}\otimes_R M,R) \!\! & \!\!  \to \!\!   & \!\!  \cdots,
\end{array}
$$}where the top row is the long $\uExt$-sequence associated to $0 \to  \Sigma_{-\varepsilon} {}^{{{\tau_\hbar}}^{-1}}\! M \xrightarrow{\lambda_\hbar} M  \to  \bar{R}\otimes_R  M  \to 0$. Apply Lemma \ref{Rees-lemma-regular} to the 1st and 4th term of the bottom row, the result follows.
\end{proof}

\begin{proposition}\label{Nakayama-auto-reduce}
Assume $\uExt_{R^e}^{i,G}(R,R^e)=0$ for $i\neq d$ and $\uExt_{R^e}^{d,G}(R,R^e) \cong \Sigma_{\mathfrak{l}}{}^1\!R^\mu$ for some $\mu\in \Aut_G(R)$ and  $\mathfrak{l}\in G$. Then $\uExt_{\bar{R}^e}^{i,G}(\bar{R},\bar{R}^e)=0$ for $i\neq d-1$ and $\uExt_{\bar{R}^e}^{d-1,G}(\bar{R},\bar{R}^e) \cong \Sigma_{\mathfrak{l}-\varepsilon} {}^1\bar{R}^{\bar{\mu}\circ \bar{{{\tau_\hbar}}}}.$
\end{proposition}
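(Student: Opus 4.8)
The plan is to compute $\uExt_{\bar R^e}^{i,G}(\bar R,\bar R^e)$ by applying Rees' Lemma (Lemma \ref{Rees-lemma-regular}) to the algebra $R^e$ rather than $R$. The key observation, already flagged in the excerpt, is that $\hbar\otimes 1_R$ is a homogeneous regular normal non-invertible element of $R^e=R\otimes R^o$ of degree $\varepsilon$, that $R^e/(\hbar\otimes 1_R)\cong \bar R\otimes R^o$, and that $\tau_{\hbar\otimes 1_R}=\tau_\hbar\otimes\id_R$. So I would first apply Lemma \ref{Rees-lemma-regular} with the pair $(R^e,\hbar\otimes 1_R)$ and $L=\bar R\otimes R^o$-module of the appropriate shape, to get
$$
\uExt_{\bar R\otimes R^o}^{i,G}(\,\cdot\,,\bar R\otimes R^o)\;\cong\;\Sigma_{-\varepsilon}\,\uExt_{R^e}^{i+1,G}(\,\cdot\,,R^e)^{\tau_\hbar\otimes\id_R}
$$
in the appropriate module category, applied to the bimodule $\bar R$. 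Then I would perform the analogous reduction on the right-hand $R^o$-factor, using $1_R\otimes\hbar^o$ (degree $\varepsilon$, with $\tau_{1_R\otimes\hbar^o}=\id_R\otimes\tau_\hbar^{-1}$, and $(\bar R\otimes R^o)/(1_R\otimes\hbar^o)\cong \bar R\otimes \bar R^o=\bar R^e$), applying Rees' Lemma a second time. Combining the two shifts of degree $-\varepsilon$, one might at first expect a total shift of $-2\varepsilon$; the point to be careful about is that in each reduction the "$L$" side is already $\bar R$ (annihilated by $\hbar$), so only one genuine degree-$+1$ homological shift and one genuine $\Sigma_{-\varepsilon}$ survive — precisely the mechanism of Lemma \ref{Rees-lemma-regular}, where the spectral sequence collapses because $\uExt_R^{q,G}(\bar R,\cdot)$ is concentrated in $q=1$. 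Tracking this bookkeeping carefully gives the shift $d\rightsquigarrow d-1$ and $\mathfrak l\rightsquigarrow\mathfrak l-\varepsilon$.

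The second, and genuinely delicate, part is identifying the bimodule twist. After the two reductions the outcome will be of the form $\Sigma_{\mathfrak l-\varepsilon}$ applied to ${}^1 R^\mu$ (the given $\uExt_{R^e}^{d,G}(R,R^e)$) with an additional twist on the $R^o$-side coming from the $\tau_\hbar\otimes\id_R$ in Lemma \ref{Rees-lemma-regular}, then passed through $\bar R\otimes_R(-)\otimes_R\bar R$. I would unwind this as follows: the first application of Rees' Lemma contributes a twist $\tau_\hbar$ acting on the $R$-side (the side being restricted to $\bar R$), and the second contributes a twist on the $R^o$-side. Using the standard identity that for $\sigma$ fixing $\hbar$ one has $({}^1 R^\mu)$ reduced modulo $\hbar$ equal to ${}^1\bar R^{\bar\mu}$, and that tensoring a one-sided twist through the quotient produces $\overline{\tau_\hbar}$, the composition of the ambient twist $\mu$ with $\tau_\hbar$ yields $\bar\mu\circ\overline{\tau_\hbar}$. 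Here I would lean on Proposition \ref{Nakayama-auto-behavior}(3) to know $\mu(\hbar)$ is a unit multiple of $\hbar$, so that $\bar\mu$ is well-defined, and on the fact (noted just before the theorem) that $\overline{\mu_{R,G}}$ and $\overline{\tau_\hbar}$ both make sense in $\Aut_G(\bar R)$.

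Concretely, the steps in order: (i) record that $R^e$ with $\hbar\otimes 1_R$ satisfies the hypotheses of Rees' Lemma and identify the quotient as $\bar R\otimes R^o$; (ii) apply Lemma \ref{Rees-lemma-regular} to get $\uExt_{\bar R\otimes R^o}^{i,G}(\bar R,\bar R\otimes R^o)\cong\Sigma_{-\varepsilon}\uExt_{R^e}^{i+1,G}(\bar R, R^e)^{\tau_\hbar}$, and then use $\hbar$-torsionfreeness of $R$ together with the hypothesis $\uExt_{R^e}^{i,G}(R,R^e)=0$ for $i\ne d$ (via Lemma \ref{long-exact-sequence-regular} applied to $M=R$, or directly) to replace $\uExt_{R^e}^{i+1,G}(\bar R,R^e)$ by $\uExt_{\bar R\otimes R^o}^{\,\cdot}$-free data, obtaining vanishing for $i\ne d-1$ and $\uExt_{\bar R\otimes R^o}^{d-1,G}(\bar R,\bar R\otimes R^o)\cong\Sigma_{\mathfrak l-\varepsilon}{}^1\bar R^{\mu}$ with a $\tau_\hbar$-twist on the right; (iii) repeat (i)–(ii) for the right factor using $1_R\otimes\hbar^o$, which contributes no new homological shift since the left slot is now $\bar R\otimes\bar R^o$-admissible, to descend from $\bar R\otimes R^o$ to $\bar R^e$; (iv) assemble the twists, using Proposition \ref{Nakayama-auto-behavior}(3) to pass $\mu$ and $\tau_\hbar$ to the quotient, landing on ${}^1\bar R^{\bar\mu\circ\overline{\tau_\hbar}}$. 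The main obstacle I anticipate is step (iv): keeping the left/right conventions consistent through two successive Rees reductions and the opposite-algebra identifications, so that the composite twist comes out as $\bar\mu\circ\overline{\tau_\hbar}$ in the correct order rather than its inverse or $\overline{\tau_\hbar}\circ\bar\mu$ — this is where I would be most careful and where a diagram chase analogous to the one in Lemma \ref{Ext-group-structure} is likely unavoidable.
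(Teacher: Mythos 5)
Your overall strategy is the paper's own: reduce one tensor factor of $R^e$ at a time, using Rees' Lemma (Lemma \ref{Rees-lemma-regular}) for the factor on which $\bar R$ is annihilated by the chosen normal element, and the long exact sequence of Lemma \ref{long-exact-sequence-regular} (i.e.\ $\hbar$-torsionfreeness of $R$) for the other; the paper merely treats the factors in the opposite order, first $(R^e,1_R\otimes\hbar^o)$ with $M=R$, then $(R\otimes\bar R^o,\hbar\otimes 1)$ with $L=\bar R$. However, your bookkeeping contains a real error, and the written argument only reaches the correct final formula because two mistakes cancel. The intermediate Ext after handling one factor is concentrated in homological degree $d$ with internal shift $\mathfrak l$, not in degree $d-1$ with shift $\mathfrak l-\varepsilon$ as you assert in step (ii): since multiplication by $\hbar$ on $\Sigma_{\mathfrak l}{}^1R^\mu$ is injective ($\hbar$ is regular) with cokernel a twist of $\Sigma_{\mathfrak l+\varepsilon}\bar R$, the group $\uExt_{R^e}^{\ast,G}(\bar R,R^e)$ lives only in degree $d+1$, so your own displayed isomorphism $\uExt_{\bar R\otimes R^o}^{i,G}(\bar R,\bar R\otimes R^o)\cong\Sigma_{-\varepsilon}\uExt_{R^e}^{i+1,G}(\bar R,R^e)^{\tau_\hbar}$ forces concentration at $i=d$ with shift $\mathfrak l$, and in particular $\uExt_{\bar R\otimes R^o}^{d-1,G}(\bar R,\bar R\otimes R^o)=0$ (compare the paper's intermediate step $\uExt_{R\otimes\bar R^o}^{d,G}(\bar R,R\otimes\bar R^o)\cong\Sigma_{\mathfrak l}{}^1\bar R^{\bar\mu}$). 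Correspondingly, your claim that the second reduction ``contributes no new homological shift since the left slot is now $\bar R\otimes\bar R^o$-admissible'' is backwards: it is exactly when $L$ is annihilated by the normal element that Rees' Lemma applies, and it \emph{does} contribute a shift of one in homological degree and a $\Sigma_{-\varepsilon}$. The compensating $+1$ and $+\varepsilon$ come from the long-exact-sequence step (the cokernel of multiplication by $\hbar$), i.e.\ from $R$ being $\hbar$-torsionfree but not annihilated --- not, as your first paragraph suggests, from ``the $L$ side being already $\bar R$ in each reduction.''

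On the twist, which you defer to an anticipated diagram chase: once the bookkeeping is corrected no chase of the kind in Lemma \ref{Ext-group-structure} is needed. The four-term exact sequence identifies the intermediate Ext as the cokernel of $\lambda_\hbar\colon\Sigma_{\mathfrak l-\varepsilon}{}^{\tau_\hbar^{-1}}R^\mu\to\Sigma_{\mathfrak l}{}^1R^\mu$, which is $\Sigma_{\mathfrak l}{}^1\bar R^{\bar\mu}$ (here your appeal to Proposition \ref{Nakayama-auto-behavior} is legitimate: $\mu(\hbar)$ is a unit multiple of $\hbar$, so $\bar\mu$ is defined), and the single remaining application of Rees' Lemma supplies exactly one twist $(-)^{\overline{\tau_\hbar}}$, giving $\Sigma_{\mathfrak l-\varepsilon}{}^1\bar R^{\bar\mu\circ\overline{\tau_\hbar}}$. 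As written, your proposal neither establishes the intermediate vanishing and identification correctly nor pins down the twist, so it is not yet a proof; but it repairs readily along exactly these lines, which are the paper's.
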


\begin{proof}Apply Lemma  \ref{long-exact-sequence-regular} to  $(R^e, 1_R\otimes \hbar^o)$ with $M=R$, the assumption tells us that  $\uExt_{R\otimes \bar{R}^o}^{i, G}(\bar{R}, R\otimes \bar{R}^o)=0$ for $i\neq d-1,\, d$ and there is an exact sequence in $\Mod^GR^e$ of the form
\begin{eqnarray*}
0\to \uExt_{R\otimes \bar{R}^o}^{d-1,G}(\bar{R},R\otimes \bar{R}^o) \to \Sigma_{\mathfrak{l}-\varepsilon}{}^{\tau_\hbar^{-1}}R^\mu \xrightarrow{\lambda_\hbar} \Sigma_{\mathfrak{l}} {}^1R^\mu \to  \uExt_{R\otimes \bar{R}^o}^{d,G}(\bar{R},R\otimes \bar{R}^o) \to 0.
\end{eqnarray*}
So $\uExt_{R\otimes \bar{R}^o}^{d-1, G}(\bar{R}, R\otimes \bar{R}^o)=0$ and $\uExt_{R\otimes \bar{R}^o}^{d, G}(\bar{R}, R\otimes \bar{R}^o)\cong \Sigma_{\mathfrak{l}} {}^{1} \bar{R}^{\bar{\mu}}$ in $\Mod^GR\otimes \bar{R}^o$. Now, apply Lemma \ref{Rees-lemma-regular} to $(R\otimes \bar{R}^o, \hbar\otimes 1_{\bar{R}^o})$ with $L=\bar{R}$, the result follows directly.
\end{proof}

Now we are ready to prove the main result, namely Theorem \ref{Calabi-Yau-normal-extension}. A module $M\in \Mod^GR$ is  called {\em $\hbar$-discrete} if for any $\gamma\in G$ there is an integer $n\geq0$ such that $(\hbar^{n}\cdot M)_\gamma =0$.   Note that $M=0$ if and only if $\hbar\cdot M=M$ for $\hbar$-discrete modules $M\in \Mod^GR$.  Similar definition and remark apply to modules in $\Mod^GR^o$.The assumptions on $\supp R$ and on $\varepsilon$ in Theorem \ref{Calabi-Yau-normal-extension} is to make sure that every finitely generated $R$-bimodule is $\hbar$-discrete as a left $R$-module and as a right $R$-module.

\begin{proof}[Proof of Theorem \ref{Calabi-Yau-normal-extension}]
Assume $\bar{R}$ is skew $G$-Calabi-Yau of dimension $d$.  By Proposition \ref{homo-smooth-normal-extension}, $R$ is homologically $G$-smooth. 
Let $\mu=\mu_{\bar{R},G}$ and $\mathfrak{l}=\mathfrak{l}_{\bar{R},G}$. Apply Lemma \ref{Rees-lemma-regular} to the pairs $(\bar{R}\otimes R^o, 1\otimes\hbar^o)$ and $(R\otimes \bar{R}^o, \hbar\otimes 1)$ with $L=\bar{R}$, one gets  $\uExt_{\bar{R}\otimes R}^{i,G}(\bar{R},\bar{R}\otimes R)= \uExt_{R\otimes\bar{R}^o}^{i,G}(\bar{R}, R\otimes \bar{R}^o)=0$ for $i\neq d+1$ and $$\uExt_{\bar{R}\otimes R^o}^{d+1,G}(\bar{R},\bar{R}\otimes R^o) \cong \uExt_{R\otimes\bar{R}^o}^{d+1,G}(\bar{R}, R\otimes \bar{R}^o) \cong \Sigma_{\mathfrak{l}+ \varepsilon}\, \bar{R}$$ in $\Mod^GR$ and $\Mod^GR^o$. Then apply Lemma \ref{long-exact-sequence-regular} to $(R^e, \hbar\otimes 1_{R^o})$ and $(R^e, 1_R\otimes \hbar^o)$ with $M=R$, one gets $\uExt_{R^e}^{i,G}(R, R^e)\cdot \hbar = \hbar \cdot \uExt_{R^e}^{i,G}(R, R^e)= \uExt_{R^e}^{i,G}(R, R^e)$ for $i\neq d+1$ and two commutative diagrams
$$
\begin{array}{ccccccccc}
0 &\to & \Sigma_{\mathfrak{l}}\, R^{\tau_\hbar} &\xrightarrow{\rho_\hbar} & \Sigma_{\mathfrak{l}+\varepsilon}\, R &\to & \Sigma_{\mathfrak{l}+\varepsilon}\, \bar{R} &\to & 0\\
&& \downarrow f &&\downarrow f && \downarrow \id&& \\
0 &\to& \Sigma_{-\varepsilon} \uExt_{R^e}^{d+1,G}(R, R^e)^{\tau_\hbar} &\xrightarrow{\rho_\hbar}& \uExt_{R^e}^{d+1,G}(R, R^e) &\to& \Sigma_{\mathfrak{l}+\varepsilon}\ \bar{R} &\to& 0
\end{array}
$$
and
$$
\begin{array}{ccccccccc}
0 &\to & \Sigma_{\mathfrak{l}}\, {}^{\tau_\hbar^{-1}}R &\xrightarrow{\lambda_\hbar} & \Sigma_{\mathfrak{l}+\varepsilon}\,R &\to & \Sigma_{\mathfrak{l}+\varepsilon}\, \bar{R} &\to & 0\\
&& \downarrow g &&\downarrow g&& \downarrow \id&& \\
0 &\to& \Sigma_{-\varepsilon}{}^{\tau_\hbar^{-1}}\uExt_{R^e}^{d+1,G}(R, R^e) &\xrightarrow{\lambda_\hbar}& \uExt_{R^e}^{d+1,G}(R, R^e) &\to& \Sigma_{\mathfrak{l}+\varepsilon} \bar{R}  &\to& 0
\end{array}
$$
in $\Mod^GR^o$ and $\Mod^GR$ respectively with rows exact. Note that $\uExt_{R^e}^{i,G}(R,R^e)$ is $\hbar$-discrete by standard homological methods. So, clearly, $\uExt_{R^e}^{i,G}(R,R^e)=0$ for $i\neq d+1$. Also, $\uExt_{R^e}^{d+1,G}(R,R^e) \cong \Sigma_{\mathfrak{l}+\varepsilon}\, R$ in $\Mod^GR$ and $\Mod^GR^o$ by applying  the well-known Snake Lemma on the above two commutative diagrams. Hence $\uExt_{R^e}^{d+1,G}(R,R^e)\cong \Sigma_{\mathfrak{l}+\varepsilon}\, {}^1R^\nu $ for some $\nu \in \Aut_G(R)$ by \cite[Lemma 2.9]{MM}. Thus, $R$ is skew $G$-Calabi-Yau of dimension $d+1$ with $\mu_{R,G} = \nu$ and  $\mathfrak{l}_{R,G}= \mathfrak{l}+\varepsilon$. Further, ${}^1\bar{R}^{\mu} \cong {}^1\bar{R}^{\overline{\mu_{R,G}}\circ \bar{\tau}_\hbar}$ in $\Mod^G\bar{R}^e$ by Proposition \ref{Nakayama-auto-reduce}.
Note that every inner automorphism of $\bar{R}$ is induced by that of $R$, so $\mu=\bar{c_\xi}\circ \overline{\mu_{R,G}}\circ \bar{\tau}_\hbar$ for some $\xi\in U(R_0)$. Replacing $\mu_{R,G}$ by $c_\xi\circ \mu_{R,G}$, one completes the proof.
\end{proof}

The remaining of the section is devoted to prove Proposition \ref{homo-smooth-normal-extension}. To this end, we introduce two more terminologies.
Let $A$ be a $G$-graded algebra. A module $M\in \Mod^GA$ is said to be {\it (strongly) pseudo-coherent}  if it admits a (bounded) projective resolution in $\Mod^GA$ with each term finitely generated.  So $A$ is homologically $G$-smooth just means that $A$ is strongly pseudo-coherent in $\Mod^GA^e$. The next two lemmas depict the behavior of (strongly) pseudo-coherentness under normal extensions. 

\begin{lemma}\label{pseudo-coherent-lift-torsion} 
For every bounded above complex  $M^*$  in $\Mod^GR$ with each term pseudo-coherent, there exists a surjective quasi-isomorphism $P^*\to M^*$ of complexes in $\Mod^GR$ such that $P^*$ is bounded above with the same right bound as $M^*$ and with each term finitely generated and projective.
Consequently,  (strongly) pseudo-coherent modules  in $\Mod^G\bar{R}$  are also (strongly) pseudo-coherent in  $\Mod^GR$.
\end{lemma}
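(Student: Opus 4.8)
\emph{Lifting the complex.} I prove the main assertion by downward induction on cohomological degree; the ``Consequently'' clause will then follow from it together with the length-one finite free resolution of $\bar R$ over $R$ exhibited below. So, say $M^i=0$ for $i>b$. I construct, by downward induction on $n\le b$, finitely generated projective modules $P^n\in\Mod^GR$ with differentials $\partial^{n}\colon P^n\to P^{n+1}$ and a chain map $\phi\colon P^{*}\to M^{*}$ such that at each stage: (a) $\phi^i$ is surjective for every $i\ge n$; (b) the kernel subcomplex $K^{*}:=\ker\phi$ is exact in degrees $>n$. For $n=b$ take $P^b$ a finitely generated free cover of $M^b$, which exists since $M^b$, being pseudo-coherent, is finitely generated. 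For the inductive step form the pullback $W^{\flat}:=\{(p,m)\in\ker\partial^{n}\times M^{n-1}\mid \phi^{n}(p)=d_M^{n-1}(m)\}$; property (b) at stage $n$ guarantees, first, that $W^{\flat}$ surjects onto $M^{n-1}$ (one corrects a naive lift of $d_M^{n-1}(m)$ along $\phi^n$ by an element of $\ker\phi^n$ so as to annihilate its image under $\partial^n$, which is possible by exactness of $K^{*}$ at $K^{n+1}$), and, second, that the kernel of $W^{\flat}\twoheadrightarrow M^{n-1}$ is the cycle module $Z^{n}:=\ker(K^n\to K^{n+1})$. Taking $P^{n-1}$ a finitely generated free cover of $W^{\flat}$ then supplies $\partial^{n-1}$ and $\phi^{n-1}$ with $\partial^{n}\partial^{n-1}=0$, with $\phi^{n-1}$ surjective, and with $\im(\partial^{n-1}|_{K^{n-1}})=Z^{n}$, which is exactly (b) at stage $n-1$. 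The complex $P^{*}$ so obtained is bounded above with the same right bound $b$ as $M^{*}$, has finitely generated projective terms, and $\phi\colon P^{*}\to M^{*}$ is a degreewise surjection with acyclic kernel, hence the asserted surjective quasi-isomorphism.

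\emph{Keeping the syzygies pseudo-coherent.} For the finitely generated free cover at each step to exist, $W^{\flat}$ --- equivalently $Z^n$ --- must be finitely generated, and this is the one place pseudo-coherence intervenes. Surjectivity of the $\phi^i$ gives short exact sequences $0\to K^i\to P^i\to M^i\to 0$ with $P^i$ finitely generated projective and $M^i$ pseudo-coherent, whence each $K^i$ is pseudo-coherent by the graded Schanuel lemma. By (b) the sequence $0\to Z^n\to K^n\to K^{n+1}\to\cdots\to K^b\to 0$ is exact, so splitting it into short exact sequences and using repeatedly that the kernel of a surjection between pseudo-coherent modules is pseudo-coherent (again Schanuel) shows $Z^n$ pseudo-coherent, in particular finitely generated; then $W^{\flat}$, an extension of $M^{n-1}$ by $Z^n$, is pseudo-coherent as well. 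I expect the genuine obstacle to be exactly this bookkeeping: arranging $\partial^{2}=0$ --- which is what forces $W^{\flat}$ to be built inside $\ker\partial^{n}$ and makes the surjectivity $W^{\flat}\twoheadrightarrow M^{n-1}$ rest on the inductive exactness --- while simultaneously keeping the carried-over syzygies pseudo-coherent.

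\emph{The consequence.} Let $N$ be (strongly) pseudo-coherent in $\Mod^G\bar R$ and fix a resolution $\cdots\to\bar P_1\to\bar P_0\to N\to 0$ by finitely generated projective $\bar R$-modules, bounded in the strong case. Since $\hbar$ is homogeneous, regular and normal, $R\hbar=\hbar R$ and $\rho_\hbar$ is injective, so $0\to\Sigma_{-\varepsilon}R\xrightarrow{\rho_{\hbar}}R\to\bar R\to 0$ is a length-one finite free resolution of $\bar R$ over $R$; hence every finitely generated projective $\bar R$-module, being a direct summand of a finite free one, is finitely presented and of projective dimension $\le1$ over $R$, so admits a length-one resolution by finitely generated projective $R$-modules and is in particular pseudo-coherent over $R$. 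Consequently the complex $\bar P^{*}$ with $\bar P^{-i}=\bar P_i$ is a bounded-above complex in $\Mod^GR$ with pseudo-coherent terms and right bound $0$, quasi-isomorphic to $N$, and the first part of the lemma upgrades it to a bounded-above resolution of $N$ by finitely generated projective $R$-modules; thus $N$ is pseudo-coherent in $\Mod^GR$. In the strong case one moreover has $\pdim_R N<\infty$, since $\pdim_R \bar P_i\le1$ for all $i$ and dimension shifting along the finite $\bar R$-resolution of $N$ bounds $\pdim_R N$; truncating a finitely generated projective $R$-resolution of $N$ at that length --- the last syzygy being then finitely generated and projective --- exhibits $N$ as strongly pseudo-coherent in $\Mod^GR$.
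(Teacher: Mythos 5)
Your proof is correct and takes essentially the same route as the paper's: an inductive pullback-plus-finitely-generated-cover construction for the lifting statement, with pseudo-coherence of the successive syzygies preserved by the standard closure properties (extensions, kernels of surjections), and the consequence deduced from $\pdim_R\bar{R}=1$ together with the lifting statement and a truncation/change-of-rings bound. The only differences are organizational: you build the chain map $P^*\to M^*$ directly and track acyclicity of its kernel, whereas the paper builds a tower of intermediate complexes linked by pullback squares and column quasi-isomorphisms, and you spell out the finite-projective-dimension/truncation step that the paper delegates to the graded version of \cite[Theorem 7.3.5(i)]{MR}.
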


\begin{proof}
Without lost of generality, we may assume $M^i=0$ for $i>0$.  
Define  $P^i=0$ for $i>0$ and construct by induction on squares $\#_n$ a commutative diagram
$$
\xymatrix{
\ddots& \vdots \ar@{->>}[d] \ar[dr] & \vdots \ar[d] & \vdots \ar[d] & \vdots \ar[d] & \\
\cdots \ar[r] & T^{-3} \ar@{}[ul]|{\#_4} \ar@{}[dr]|{\#_3}\ar[r] \ar[d] & P^{-2} \ar[r] \ar@{->>}[d] \ar[dr] &P^{-1} \ar[r] \ar[d]^{\id} & P^0 \ar[r] \ar[d]^{\id} & 0\\
\cdots \ar[r] & X^{-3} \ar[r]  \ar[d]^{\id}      & T^{-2} \ar@{}[dr]|{\#_2} \ar[r]  \ar[d]      &P^{-1} \ar[r] \ar@{->>}[d] \ar[dr] & P^0 \ar[r] \ar[d]^{\id} & 0\\
\cdots \ar[r] & X^{-3} \ar[r]  \ar[d]^{\id}      & X^{-2} \ar[r] \ar[d]^{\id}       & T^{-1} \ar[r] \ar[d] \ar@{}[dr]|{\#_1} & P^0 \ar[r] \ar@{->>}[d] \ar[dr] & 0\\
\cdots \ar[r] & X^{-3} \ar[r]        & X^{-2} \ar[r]        & X^{-1} \ar[r] & X^0 \ar[r] & 0
}
$$
in $\Mod^GA$ with each rows a complex and each $\#_n$ a pullback square such that $P^{-n}$ is finitely generated and projective and  $\ker (P^{-n}\twoheadrightarrow T^{-n})$ is pseudo-coherent. The required conditions on $\#_n$ assure  $T^{-n}$ is pseudo-coherent, which guarantees this inductive construction. By standard abelian category theory, column morphisms between two adjacent rows form a quasi-isomorphism of complexes. So the composition morphisms $P^{-n} \to T^{-n} \to X^{-n}$ form a surjective quasi-isomorphism $P^* \to X^*$.

It is well known that finitely presented objects of $\Mod^GR$ are closed under taking extensions and direct summands. Since $\bar{R}$ is finitely presented and has projective dimension $1$ in $\Mod^GR$, all finitely generated projective modules in $\Mod^G\bar{R}$ are in particular  pseudo-coherent in $\Mod^GR$.  Now the second statement follows directly from the first one and  the graded version of \cite[Theorem 7.3.5 (i)]{MR} .
\end{proof}

\begin{lemma}\label{pseudo-coherent-lift-torsionfree}
Suppose that $R$ is $\hbar$-discrete as a left $R$-module and $M\in \Mod^GR$ is finitely generated and $\hbar$-torsionfree. Then, if  $\bar{R}\otimes_RM$ admits a  projective resolution $$\bar{P}^*=(\cdots \to \bar{P}^{-2} \xrightarrow{\bar{d}^{-2}} \bar{P}^{-1} \xrightarrow{\bar{d}^{-1}} \bar{P}^0 \to 0 \to \cdots) \quad \xrightarrow{\bar{\eta}} \quad \bar{R} \otimes_RM $$  in $\Mod^G\bar{R}$ with each term finitely generated, it follows that there is a  projective resolution $$P^* = (\cdots \to P^{-2} \xrightarrow{d^{-2}} P^{-1}\xrightarrow{d^{-1}} P^0\to 0\to \cdots) \quad \xrightarrow{\eta} \quad M$$ of $M$ in $\Mod^GR$ with each term finitely generated  and an isomorphism  $F:\bar{R}\otimes_R P^*\xrightarrow{\cong} \bar{P}^*$ of complexes in $\Mod^G\bar{R}$ compatible with augmentations in the sense that  $\bar{R}\otimes_R \eta = \bar{\eta}\circ F$. 
As a consequence, $M$ is (strongly) pseudo-coherent in $\Mod^GR$ if and only if $\bar{R}\otimes_RM$ is too in $\Mod^G\bar{R}$.
\end{lemma}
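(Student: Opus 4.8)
The plan is to construct $P^*$ together with the comparison isomorphism $F$ by a downward induction, lifting the resolution $\bar P^*$ one term at a time from $\bar R$ to $R$. Two mechanisms do all the work. The first is that finitely generated graded projective $\bar R$-modules lift along the quotient map $R\twoheadrightarrow\bar R$. The second is the tension between $\hbar$-discreteness and $\hbar$-torsionfreeness: $\bar R$ has the two-term finitely generated projective resolution $0\to\Sigma_{-\varepsilon}{}^{\tau_\hbar^{-1}}\!R\xrightarrow{\lambda_\hbar}R\to\bar R\to0$ in $\Mod^GR$ (the case $M=R$ of the sequence used in the proof of Lemma \ref{long-exact-sequence-regular}), so $\Tor_i^R(\bar R,N)=0$ for $i\ge2$, while $\Tor_1^R(\bar R,N)$ is, up to a twist, the $\hbar$-torsion submodule of $N$, hence vanishes whenever $N$ is $\hbar$-torsionfree. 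I would also record at the outset that every finitely generated module in $\Mod^GR$ is $\hbar$-discrete, being a quotient of a finite direct sum of shifts of $R$, so that every submodule of a finitely generated projective $R$-module is $\hbar$-discrete as well, and that an $\hbar$-discrete module $N$ with $\hbar N=N$ is zero.

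The key preparatory step — and the only genuinely delicate one — is the lifting assertion: a finitely generated graded projective $\bar R$-module $\bar Q$ equals $\bar R\otimes_RQ$ for some finitely generated graded projective $R$-module $Q$. Writing $\bar Q$ as the image of an idempotent $\bar e\in\mathrm{End}_{\Mod^G\bar R}(\bigoplus_i\bar R(\gamma_i))$, one lifts $\bar e$ to an idempotent $e\in\mathrm{End}_{\Mod^GR}(\bigoplus_i R(\gamma_i))$ because the kernel $I$ of the reduction homomorphism $\mathrm{End}_{\Mod^GR}(\bigoplus_i R(\gamma_i))\to\mathrm{End}_{\Mod^G\bar R}(\bigoplus_i\bar R(\gamma_i))$ is nilpotent. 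Indeed, moving all occurrences of $\hbar$ to one side by normality shows that the matrix entries of an $n$-fold product of elements of $I$ lie in $\hbar^nR$; these entries sit in the finitely many fixed degrees $\gamma_i-\gamma_j$, each of which is annihilated by a large enough power of $\hbar$ by $\hbar$-discreteness of $R$. Then $Q:=\im e$ is finitely generated projective over $R$ with a chosen isomorphism $\bar R\otimes_RQ\cong\bar Q$. (This is the graded incarnation of the kind of lifting already invoked in the proof of Lemma \ref{pseudo-coherent-lift-torsion}.)

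Granting this, the induction is bookkeeping. Let $\bar B^{-n}$ be the $n$-th syzygy of $\bar R\otimes_RM$ read off $\bar P^*$, with $\bar B^{1}:=\bar R\otimes_RM$, so that $\bar d^{-n}$ (and $\bar\eta$ when $n=0$) factors as a surjection $\bar P^{-n}\twoheadrightarrow\bar B^{-n+1}$ with kernel $\bar B^{-n}$. Lift $\bar P^{0}$ to a finitely generated projective $P^{0}$; projectivity of $P^{0}$ against $M\twoheadrightarrow\bar R\otimes_RM$ lifts the composite $P^{0}\twoheadrightarrow\bar R\otimes_RP^{0}\cong\bar P^{0}\xrightarrow{\bar\eta}\bar R\otimes_RM$ to a map $\eta\colon P^{0}\to M$. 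Since $\coker\eta$ is a quotient of $M$ it is $\hbar$-discrete, and $\bar R\otimes_R\coker\eta\cong\coker\bar\eta=0$, so $\eta$ is onto. Since $M$ is $\hbar$-torsionfree, applying $\bar R\otimes_R(-)$ to $0\to Z^{0}\to P^{0}\to M\to0$, where $Z^{0}:=\ker\eta$ is a submodule of a projective hence $\hbar$-torsionfree, stays exact and identifies $\bar R\otimes_RZ^{0}$ with $\bar B^{0}$. Inductively, given a finitely generated projective $P^{-n+1}$ with $\hbar$-torsionfree submodule $Z^{-n+1}$ and an isomorphism $\bar R\otimes_RZ^{-n+1}\cong\bar B^{-n+1}$, lift $\bar P^{-n}$ to $P^{-n}$ and lift $\bar P^{-n}\twoheadrightarrow\bar B^{-n+1}\cong\bar R\otimes_RZ^{-n+1}$ to $\delta^{-n}\colon P^{-n}\to Z^{-n+1}$; again $\hbar$-discreteness of $\coker\delta^{-n}$ makes $\delta^{-n}$ onto, and $\hbar$-torsionfreeness of $Z^{-n+1}$ keeps $\bar R\otimes_R(-)$ exact on $0\to Z^{-n}\to P^{-n}\to Z^{-n+1}\to0$ with $Z^{-n}:=\ker\delta^{-n}$, giving $\bar R\otimes_RZ^{-n}\cong\bar B^{-n}$. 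Taking $d^{-n}$ to be $\delta^{-n}$ followed by $Z^{-n+1}\hookrightarrow P^{-n+1}$, the identities $\im d^{-n}=Z^{-n+1}=\ker d^{-n+1}$ and $\im d^{-1}=Z^{0}=\ker\eta$ exhibit $P^*\xrightarrow{\eta}M$ as a finitely generated projective resolution, and the chosen lifting isomorphisms assemble by construction into a chain isomorphism $F\colon\bar R\otimes_RP^*\xrightarrow{\cong}\bar P^*$ with $\bar R\otimes_R\eta=\bar\eta\circ F$.

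For the final consequence: if $\bar R\otimes_RM$ is (strongly) pseudo-coherent, feed a finitely generated (and, in the strong case, bounded) projective resolution $\bar P^*$ into the construction; the resulting $P^*$ has finitely generated projective terms, and if $\bar P^{-n}=0$ then $\hbar P^{-n}=P^{-n}$ with $P^{-n}$ finitely generated forces $P^{-n}=0$, so boundedness is inherited. Conversely, if $M$ has a finitely generated projective resolution $P^*$ over $R$, then $\Tor_{\ge1}^R(\bar R,M)=0$ because $M$ is $\hbar$-torsionfree, so $\bar R\otimes_RP^*\to\bar R\otimes_RM$ is a resolution with finitely generated projective $\bar R$-terms, bounded whenever $P^*$ is. The step I expect to be the main obstacle is the projective-lifting assertion of the second paragraph, together with the need to thread the lifting isomorphisms coherently through all the syzygy sequences so that $F$ genuinely is a morphism of complexes respecting augmentations; the surjectivity and exactness checks driving the induction are then routine consequences of $\hbar$-discreteness and $\hbar$-torsionfreeness.
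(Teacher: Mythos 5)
Your proposal is correct, and its overall architecture is the same as the paper's: first lift finitely generated graded projectives along $R\twoheadrightarrow\bar R$, then run a downward induction that lifts each $\bar P^{-n}$ and each differential, using projectivity to lift maps, $\hbar$-discreteness plus $\hbar N=N\Rightarrow N=0$ to get surjectivity of the lifted maps, and $\hbar$-torsionfreeness of $M$ and of the syzygies (submodules of projectives) to keep $\bar R\otimes_R-$ exact on the relevant short exact sequences, so that the chosen isomorphisms assemble into the comparison map $F$; the two facts you cite for the converse direction ($\Tor^R_{\geq 2}(\bar R,-)=0$ from the two-term resolution of $\bar R$, and $\Tor^R_1(\bar R,N)$ being the $\hbar$-torsion) are exactly what the paper uses implicitly.

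The one place where you genuinely diverge is the projective-lifting step, and your variant works. The paper realizes $\bar Q$ as the image of an idempotent of $\mathrm{End}_{\Mod^G\bar R}(\bar R\otimes_R L)$, observes that $\mathrm{End}_{\Mod^GR}(L)$ is a \emph{complete} topological algebra for the $\hbar$-adic-type topology and that the kernel of the reduction map consists of topologically nilpotent elements, and then invokes Sj\"odin's Lemma 19 to lift the idempotent. You instead prove that this kernel is honestly nilpotent: an $n$-fold composite of endomorphisms landing in $\hbar L$ lands in $\hbar^n L$ by left-linearity, a degree-preserving endomorphism of $L=\bigoplus_{i}R(\gamma_i)$ is determined by the images of the generators, these images live in the finitely many graded pieces $(\hbar^nR)_{\gamma_j-\gamma_i}$, and $\hbar$-discreteness of $R$ (the standing hypothesis of the lemma) kills all of them for $n$ large; then ordinary idempotent lifting modulo a nilpotent ideal finishes the claim (surjectivity of the reduction on endomorphisms, which you use throughout via projectivity of $L$, should be stated here as well). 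This is a mild but genuine simplification: it trades the completeness/topological-nilpotence formalism and the external reference for an elementary uniform bound, and it is available precisely because $L$ involves only finitely many shifts, which is all the lemma needs. Your parenthetical appeal to normality of $\hbar$ is unnecessary (left-linearity alone gives $f(\hbar L)=\hbar f(L)$), but harmless. The remaining bookkeeping — threading the identifications $\bar R\otimes_R Z^{-n}\cong\bar B^{-n}$ through the lifts so that $F$ commutes with the differentials and the augmentations, and the boundedness transfer in the strongly pseudo-coherent case (where one also notes that the top syzygy vanishes by the same discreteness argument) — matches the paper's proof.
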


\begin{proof}
First we claim that for every finitely generated projective module $\bar{Q}\in \Mod^G\bar{R}$, there is a finitely generated projective module $Q\in \Mod^GR$ such that $\bar{Q}\cong \bar{R}\otimes_RQ$ in $\Mod^G\bar{R}$. 
Clearly, $\bar{Q}$ is isomorphic to a direct summand of $\bar{R}\otimes_RL$, where $L=\bigoplus_{i\in I} A (\alpha_i)$ for some finite family $\{\alpha_i\}_{i\in I}$ in $G$. Note that $L$ is $\hbar$-discrete.  Equip $\Hom_{\Mod^GR}(L,L)$ with the linear topology generated by $\{\, U_n\,\}_{n\geq0}$, where  $U_n$ is the subspace consists of all endomorphisms $f:L\to L$ with $f(L)\subseteq \hbar^n\cdot L$. It is easy to check that $\Hom_{\Mod^GR}(L,L)$ is a complete topological algebra and the natural map
\begin{eqnarray*}
\Hom_{\Mod^GR}(L,L) \to \Hom_{\Mod^G\bar{R}}(\bar{R}\otimes_R L,\bar{R}\otimes_RL), \quad  f   \mapsto \bar{R}\otimes_R f,
\end{eqnarray*}
is a surjective homomorphism of algebras whose kernel consists of topologically nilpotent elements. Therefore, by \cite[Lemma 19]{Sj}, there is an idempotent morphism $e:L\to L$ in $\Mod^GR$ such that $\bar{Q}\cong \im(\bar{R}\otimes_Re)$. Consequently, the module $Q:=\im(e)\in \Mod^GR$ fulfills the requirement.

Now assume $\bar{P}^*\xrightarrow{\bar{\eta}} \bar{R}\otimes_RM$ is a projective resolution in $\Mod^G\bar{R}$ with each term finitely generated.
By the above claim, we may construct for every $i\geq0$ a finitely generated projective module $P^{-i}\in \Mod^GR$ and an isomorphism $F^{-i}: \bar{R}\otimes_RP^{-i}\xrightarrow{\cong} \bar{P}^{-i}$ in $\Mod^G\bar{R}$. Note that for every pair of modules $X,Y\in \Mod^GR$ with $X$ projective, the map $\Hom_{\Mod^GR}(X,Y) \to \Hom_{\Mod^G\bar{R}}(\bar{R}\otimes_RX,\bar{R}\otimes_RY)$ given by
$g\mapsto \id_{\bar{R}}\otimes g$ is surjective. So we may choose a map $\eta\in \Hom_{\Mod^GR}(P^0,M)$ so that $ \bar{R}\otimes_R \eta =\bar{\eta}\circ F^0$. It is easy to see  $\bar{R}\otimes_R \coker \eta \cong \coker \bar{\eta}$, so $\coker(\eta)=0$ and hence $\eta$ is surjective. 

We proceed to construct the desired maps $d^{-n}$ for $n\geq1$ inductively. For convenience, we write $P^{1}=M$, $d^{0}=\eta$, $\bar{P}^{1}=\bar{M}$, $\bar{d}^{0}=\bar{\eta}$ and $F^{1}=\id_{\bar{R}\otimes_RM}$. Suppose $ d^{-i}$ has been constructed for $i< n$ so that $P^{-n+1}\xrightarrow{d^{-n+1}} P^{-n+2} \xrightarrow{d^{-n+2}} \cdots \xrightarrow{d^{-1}}P^0\xrightarrow{d^0} P^{1} \to 0$ is exact and $F^{-i+1}\circ (\bar{R}\otimes_R d^{-i}) = \bar{d}^{-i}\circ F^{-i}$. Then
$$0 \to \bar{R}\otimes_R \ker d^{-n+1} \xrightarrow{\bar{R}\otimes_R i_{n}} \bar{R}\otimes_R P^{-n+1} \xrightarrow{\bar{R}\otimes_R d^{-n+1}} \bar{R}\otimes_R P_{-n+2} \to \cdots  $$
is exact, where $i_{n}: \ker d^{-n+1}\to P^{-n+1}$ is the canonical injection. Thus $F^{-n+1}\circ (\bar{R}\otimes_R i_{n})$ induces an isomorphism $F_n': \bar{R}\otimes_R \ker d^{-n+1} \xrightarrow{\cong} \ker \bar{d}^{-n+1} $ in $\Mod^G\bar{R}$. Choose a map $p_n\in \Hom_{\Mod^GR}(P^{-n}, \ker d^{-n+1})$ with $\bar{R}\otimes_R p_n = (F_n')^{-1}\circ \bar{p}_n \circ F^{-n}$, where $\bar{p}_n: \bar{P}^{-n}\to \ker \bar{d}^{-n+1}$ is the canonical projection induced by $\bar{d}^{-n}$. The same reason assuring $\eta$ is surjective also implies that $p_n$ is surjective. Define $d^{-n}= i_n\circ p_n$. Then $P^{-n}\xrightarrow{d^{-n}} P^{-n+1}\xrightarrow{d^{-n+1}}\cdots \xrightarrow{d^{-1}} P^0 \xrightarrow{d^0} P^{1} \to 0 $ is exact and $F^{-n+1}\circ (\bar{R}\otimes_R d^{-n}) = \bar{d}^{-n} \circ F^{-n}$. This finishes the inductive step and proves that the desired morphisms $d^{-1},d^{-2},\cdots,$ exist.

The second statement is  clear by the first one and the following two facts. One is that the functor $\bar{R}\otimes_R-$ turns a projective resolution of $M$ in $\Mod^GR$ to a projective resolution of $\bar{R}\otimes_RM$ in $\Mod^G\bar{R}$. The other one is that $\bar{R}\otimes_R N=0$ if and only if $N=0$ for all finitely generated modules $N\in\Mod^GR$.
\end{proof}

\begin{proof}[Proof of Proposition \ref{homo-smooth-normal-extension}]
Suppose $\bar{R}$ is homologically $G$-smooth. Apply Lemma \ref{pseudo-coherent-lift-torsion} to the pair  $(R\otimes \bar{R}^o, \hbar\otimes 1_{\bar{R}^o})$, it follows that $\bar{R}$ is strongly pseudo-coherent in $\Mod^GR\otimes \bar{R}^o$. Then, apply Lemma \ref{pseudo-coherent-lift-torsionfree} to the pair $(R^e, 1\otimes \hbar^o)$ with $M=R$, it follows that $R$ is homologically $G$-smooth. Here, one shall note $R^e$ is $(1\otimes \hbar^o)$-discrete  and $R$ is $(1\otimes \hbar^o)$-torsionfree as $R^e$-modules. 
\end{proof}

\section{Homo-filtrations}

The theory of filtered rings and filtered modules has been well developed in literatures. In this section, we are interested in graded algebras endowed with a filtration by homogeneous subspaces. It turns out that the skew Calabi-Yau property can be lifted from the associated graded algebra to the original homo-filtered algebra under some mild conditions.

In the sequel, we write $\bar{G} = \Z\times G$. Given a $G$-graded algebra $A$, the Laurent algebra $A[t,t^{-1}]$ is considered to be $\bar{G}$-graded by $\deg(at^i) = (i,\deg(a))$ when $a\in A$ is homogeneous.

By a {\em homo-filtration} on a $G$-graded algebra  $A$ we mean  an increasing sequence $F=\{\,F_nA\,\}_{n\in \Z}$ of homogeneous subspaces of $A$ such that  $1\in F_0A$, $\cup_{n\in Z}F_nA=A$ and $F_mA\cdot F_nA\subseteq F_{m+n}A$ for all $m,\,n\in \Z$. The homo-filtration $F$ is called \emph{positive} if $F_nA=0$ for $n<0$. The $\bar{G}$-graded algebras
$$
R_F(A): =\bigoplus\nolimits_{n\in\Z} F_nA\cdot t^n \ \subseteq \ A[t,t^{-1}]
$$
and
$$
G_F(A):=R_F(A)/(t) \cong
\bigoplus\nolimits_{n\in \Z} F_nA/F_{n-1}A$$
are called respectively the {\em Rees algebra of $A$} and the  {\em associated graded algebra of $A$} with respect to $F$.
Note that $t$ is a regular central homogeneous element of $R_F(A)$ of degree $(1,0)$.

The next two trivial but key lemmas make it possible to lift information from  the associated graded algebra to the homo-filtered algebra in a unified and elegant way.

\begin{lemma}\label{Rees-ring-localization}
Let $A$ be a $G$-graded algebra and $F$ a homo-filtration on $A$. Then, as $\bar{G}$-graded algebras,
\begin{flalign*}
&& R_F(A)[t^{-1}] \cong & A[t,t^{-1}]. & \Box
\end{flalign*}
\end{lemma}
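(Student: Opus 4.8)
The plan is to realize the claimed isomorphism as the canonical extension of the inclusion $R_F(A)\subseteq A[t,t^{-1}]$ to the localization at $t$. First I would record that $t$ is a central, regular, homogeneous element of $R_F(A)$ of degree $(1,0)$; consequently the Ore localization $R_F(A)[t^{-1}]$ is defined, inherits a $\bar{G}$-grading whose homogeneous elements are exactly those of the form $x\,t^{-n}$ with $x\in R_F(A)$ homogeneous and $n\geq 0$, and the localization map $R_F(A)\to R_F(A)[t^{-1}]$ is an injective homomorphism of $\bar{G}$-graded algebras.

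Next I would invoke the universal property of localization. The inclusion $\iota\colon R_F(A)\hookrightarrow A[t,t^{-1}]$ is a homomorphism of $\bar{G}$-graded algebras carrying $t$ to the unit $t$ of $A[t,t^{-1}]$, so it factors uniquely through a homomorphism of $\bar{G}$-graded algebras $\theta\colon R_F(A)[t^{-1}]\to A[t,t^{-1}]$ with $\theta(x\,t^{-n})=\iota(x)\,t^{-n}$ for $x\in R_F(A)$ and $n\geq 0$.

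Then I would check that $\theta$ is bijective. For injectivity: a nonzero homogeneous element of $R_F(A)[t^{-1}]$ may be written $x\,t^{-n}$ with $x\in R_F(A)$ nonzero, and $\theta(x\,t^{-n})=\iota(x)\,t^{-n}=0$ would force $\iota(x)=0$ in $A[t,t^{-1}]$, impossible since $\iota$ is the inclusion. For surjectivity: $A[t,t^{-1}]$ is spanned by homogeneous elements $a\,t^{i}$ with $a\in A$ homogeneous and $i\in\Z$; since the homo-filtration is exhaustive there is an integer $m\geq i$ with $a\in F_mA$, whence $a\,t^{m}\in R_F(A)$ and $a\,t^{i}=\theta\big((a\,t^{m})\,t^{i-m}\big)$ lies in the image of $\theta$.

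I do not expect any genuine obstacle here; this is one of the ``trivial but key'' lemmas, and the only point deserving a line of care is that localizing a $\bar{G}$-graded algebra at a homogeneous regular central element produces a $\bar{G}$-graded algebra and that the induced map $\theta$ is degree-preserving, both of which are immediate once one notes $\deg(t)=(1,0)$.
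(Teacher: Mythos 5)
Your argument is correct: the paper leaves this lemma unproved (it is stated as one of the ``trivial but key'' lemmas, with the proof omitted), and your verification via the universal property of localization at the central regular homogeneous element $t$ of degree $(1,0)$, together with the injectivity and exhaustiveness ($\cup_n F_nA=A$) checks, is exactly the intended routine argument. Nothing is missing.
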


\begin{lemma}\label{Laurent-equivalence}
Let $A$ be a $G$-graded algebra. Then the functor $\Mod^GA\to \Mod^{\bar{G}}A[t,t^{-1}]$ given by $M\mapsto M[t,t^{-1}]$ is an equivalence of abelian categories. 
\end{lemma}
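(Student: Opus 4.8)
The plan is to exhibit an explicit quasi-inverse to the functor $(-)[t,t^{-1}]\colon \Mod^GA\to \Mod^{\bar G}A[t,t^{-1}]$, namely the degree-zero-slice functor $N\mapsto N_0 := \bigoplus_{\gamma\in G} N_{(0,\gamma)}$, and then check that the two composites are naturally isomorphic to the respective identity functors. First I would note that $N_0$ is a $G$-graded $A$-module: since $A\subseteq A[t,t^{-1}]$ sits in $\bar G$-degrees $\{0\}\times G$, the subspace $N_0$ is stable under the $A$-action, and for a degree-preserving $A[t,t^{-1}]$-linear map $f\colon N\to N'$ its restriction $f|_{N_0}$ lands in $N'_0$ and is $A$-linear and $G$-graded; this makes $(-)_0$ a functor, clearly additive and exact (it is just selecting a set of graded components).

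Next I would construct the natural isomorphisms. For $M\in\Mod^GA$, the module $M[t,t^{-1}]$ has $(M[t,t^{-1}])_{(0,\gamma)} = M_\gamma\otimes \mathbb{K}\cdot t^0$, so $(M[t,t^{-1}])_0\cong M$ canonically as $G$-graded $A$-modules, and this isomorphism is visibly natural in $M$. Conversely, for $N\in\Mod^{\bar G}A[t,t^{-1}]$ one has a natural map $N_0[t,t^{-1}]\to N$ sending $m\otimes t^n\mapsto t^n\cdot m$; it is $A[t,t^{-1}]$-linear and $\bar G$-graded by construction. The key point is that this map is bijective: surjectivity holds because $t$ is a unit in $A[t,t^{-1}]$, so every homogeneous $x\in N_{(n,\gamma)}$ equals $t^n\cdot(t^{-n}x)$ with $t^{-n}x\in N_{(0,\gamma)}=( N_0)_\gamma$; injectivity holds because $t$ acting on $N$ is invertible (with inverse the action of $t^{-1}$), so $t^n\cdot m=0$ forces $m=0$. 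Hence $N_0[t,t^{-1}]\xrightarrow{\cong} N$ naturally.

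Having these two natural isomorphisms, $(-)_0\circ(-)[t,t^{-1}]\cong \id_{\Mod^GA}$ and $(-)[t,t^{-1}]\circ(-)_0\cong \id_{\Mod^{\bar G}A[t,t^{-1}]}$, the functor $(-)[t,t^{-1}]$ is an equivalence of categories, and being additive between abelian categories it is automatically an equivalence of abelian categories. I do not anticipate a genuine obstacle here: the only thing to be a little careful about is bookkeeping of the $\bar G=\Z\times G$ grading conventions (that the copy of $A$ sits in $\Z$-degree $0$ and that multiplication by $t$ shifts the $\Z$-degree by $1$), which is exactly what makes invertibility of $t$ on $N$ translate into the slice $N_0$ recovering all of $N$ after extending scalars back. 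Everything else is the standard argument that inverting a central regular element which becomes a unit yields a localization equivalence onto its essential image, here made transparent because the image is cut out by the single condition "$t$ acts invertibly," automatic on the target category.
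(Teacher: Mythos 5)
Your proposal is correct, and since the paper treats this as one of the ``trivial but key'' lemmas and gives no written proof, your explicit degree-zero-slice quasi-inverse $N\mapsto \bigoplus_{\gamma\in G}N_{(0,\gamma)}$ is exactly the standard argument the authors intend. The verification of the two natural isomorphisms, including the use of invertibility of the $t$-action for bijectivity of $N_0[t,t^{-1}]\to N$, is complete and accurate.
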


Now we proceed to deal with those properties that we concern.

\begin{proposition}\label{homo-smooth-filtration}
Let $A$ be a $G$-graded algebra and $F$ a positive homo-filtration on $A$.
If $G_F(A)$ is homologically $\bar{G}$-smooth, then $R_F(A)$ is too and $A$ is homologically $G$-smooth.
\end{proposition}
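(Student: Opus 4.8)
The plan is to reduce the statement to the normal-extension machinery already developed in Section 3, applied to the pair $(R_F(A), t)$, where $t$ is the regular central homogeneous element of $R_F(A)$ of degree $\varepsilon = (1,0)$ in the grading group $\bar G = \Z \times G$. First I would record that $R_F(A)/(t) \cong G_F(A)$ as $\bar G$-graded algebras, so that $R_F(A)$ is indeed a (graded) normal extension of $G_F(A)$ in the sense of the introduction. To invoke Proposition \ref{homo-smooth-normal-extension} I must produce a group homomorphism $p : \bar G \to \Z$ with $p(\supp R_F(A)) \subseteq \N$ and $p(\varepsilon) > 0$. Since $F$ is a \emph{positive} homo-filtration, $F_nA = 0$ for $n < 0$, hence $\supp R_F(A) \subseteq \N \times \supp A$; therefore the projection $p : \bar G = \Z \times G \to \Z$ onto the first factor does the job, as $p(\varepsilon) = p(1,0) = 1 > 0$. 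Applying Proposition \ref{homo-smooth-normal-extension} with $R$ replaced by $R_F(A)$, $\hbar$ replaced by $t$, and the grading group $G$ replaced by $\bar G$, the hypothesis that $G_F(A)$ is homologically $\bar G$-smooth yields immediately that $R_F(A)$ is homologically $\bar G$-smooth.

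It then remains to deduce that $A$ itself is homologically $G$-smooth. For this I would pass to the localization at $t$. By Lemma \ref{Rees-ring-localization} we have $R_F(A)[t^{-1}] \cong A[t,t^{-1}]$ as $\bar G$-graded algebras, and localization at the central regular element $t$ is exact and preserves finitely generated projective modules; applying it to a finite bounded $\bar G$-graded projective bimodule resolution of $R_F(A)$ (which exists by the previous paragraph) produces such a resolution of $A[t,t^{-1}]$ over its enveloping algebra. Hence $A[t,t^{-1}]$ is homologically $\bar G$-smooth. Finally, Lemma \ref{Laurent-equivalence} gives an equivalence $\Mod^G A \simeq \Mod^{\bar G} A[t,t^{-1}]$, and correspondingly $\Mod^G A^e \simeq \Mod^{\bar G} (A[t,t^{-1}])^e$ since $(A[t,t^{-1}])^e \cong A^e[t,t^{-1}]$ as $\bar G$-graded algebras; this equivalence carries finitely generated modules to finitely generated modules and projectives to projectives, so it transports a finite bounded projective bimodule resolution of $A[t,t^{-1}]$ back to one of $A$ over $A^e$. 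Thus $A$ is homologically $G$-smooth, completing the proof.

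I expect the only genuinely delicate point to be the bookkeeping of grading groups: one must be careful that Proposition \ref{homo-smooth-normal-extension} is stated for an arbitrary abelian grading group, so that it may legitimately be applied with $\bar G$ in place of $G$, and that all the auxiliary isomorphisms ($R_F(A)^e \cong R_F(A^e)$, $(A[t,t^{-1}])^e \cong A^e[t,t^{-1}]$, $R_F(A)/(t) \cong G_F(A)$) are isomorphisms of $\bar G$-graded algebras. Everything else is formal: exactness of localization, preservation of finite generation and projectivity under the Laurent equivalence, and the fact that a bounded resolution stays bounded under both operations. No new homological input beyond Section 3 is needed.
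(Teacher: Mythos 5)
The first half of your argument is fine and is exactly what the paper does: since $F$ is positive, $\supp R_F(A)\subseteq \N\times G$, the projection $p:\bar G\to\Z$ satisfies $p(1,0)=1>0$, and $R_F(A)/(t)\cong G_F(A)$, so Proposition \ref{homo-smooth-normal-extension} applied to the pair $(R_F(A),t)$ gives that $R_F(A)$ is homologically $\bar G$-smooth.

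The descent to $A$, however, has a genuine gap. Your key identification $(A[t,t^{-1}])^e\cong A^e[t,t^{-1}]$ is false: the enveloping algebra of $A[t,t^{-1}]$ is $A[t,t^{-1}]\otimes (A[t,t^{-1}])^o\cong A^e[t^{\pm1},s^{\pm1}]$, a Laurent ring in \emph{two} central variables (one from each tensor factor), not one. Consequently Lemma \ref{Laurent-equivalence}, applied to the $G$-graded algebra $A^e$, identifies $\Mod^G A^e$ with $\Mod^{\bar G} A^e[t,t^{-1}]$ and says nothing about $\Mod^{\bar G}(A[t,t^{-1}])^e$. What your localization of a finite projective $R_F(A)^e$-resolution actually yields is that $A[t,t^{-1}]$ is homologically $\bar G$-smooth over its own (two-variable) enveloping algebra; passing from a finite resolution of $A[t,t^{-1}]$ by finitely generated projective $A^e[t^{\pm},s^{\pm}]$-modules to one of $A$ by finitely generated projective $A^e$-modules is not formal (restriction along $A^e\hookrightarrow A^e[t^{\pm},s^{\pm}]$ destroys finite generation), and under the correct Laurent equivalence the module you control corresponds to $A$ viewed over $A^e[u,u^{-1}]$ with $u$ acting trivially, which is weaker than smoothness over $A^e$. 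This is precisely the point the paper's proof is built to avoid: instead of working over $R_F(A)^e$, it introduces the filtration $F^e_nA^e=\sum_{i+j=n}F_iA\otimes F_jA^o$ on $A^e$, notes $G_{F^e}(A^e)\cong G_F(A)^e$ and $G_{F^e}(A^e)\otimes_{R_{F^e}(A^e)}R_F(A)\cong G_F(A)$, and uses Lemma \ref{pseudo-coherent-lift-torsionfree} (with $R=R_{F^e}(A^e)$, $\hbar=t$, $M=R_F(A)$) to get $R_F(A)$ strongly pseudo-coherent over the \emph{one-variable} Rees algebra $R_{F^e}(A^e)$; since $R_{F^e}(A^e)[t^{-1}]\cong A^e[t,t^{-1}]$ by Lemma \ref{Rees-ring-localization}, localizing and then invoking Lemma \ref{Laurent-equivalence} for $A^e$ gives smoothness of $A$ over $A^e$. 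To repair your proof you would need to replace the last step by this (or an equivalent) argument over $R_{F^e}(A^e)$ rather than over $R_F(A)^e$.
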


\begin{proof}
Suppose $G_F(A)$ is homologically $\bar{G}$-smooth. Then by Proposition \ref{homo-smooth-normal-extension},  it left to see $A$ is homologically $G$-smooth.
Let $F^e$ be the positive homo-filtration on $A^e$ given by $F^e_nA^e = \sum_{i+j=n} F_iA\otimes F_jA^o$. It is easy to check that $G_{F^e}(A^e) \cong G_F(A)^e$ as $\bar{G}$-graded algebras and
$$G_{F^e}(A^e) \otimes_{R_{F^e}(A^e)} R_F(A) \cong G_{F}(A)$$
in $\Mod^{\bar{G}}G_{F^e}(A^e)$. Then Lemma \ref{pseudo-coherent-lift-torsionfree} tells us that $R_F(A)$ is strongly pseudo-coherent in $\Mod^{\bar{G}}R_{F^e}(A^e)$. It follows that $A[t,t^{-1}]$  is too in $\Mod^{\bar{G}} A^e[t,t^{-1}]$ because of Lemma \ref{Rees-ring-localization} and the isomorphism
$$
A^e[t,t^{-1}] \otimes_{R_{F^e}(A^e)} R_F(A) \cong A[t,t^{-1}]
$$
in $\Mod^{\bar{G}}A^e[t,t^{-1}]$.  Consequently, $A$ is homologically $G$-smooth by Lemma \ref{Laurent-equivalence}. 
\end{proof}

\begin{theorem}\label{Calabi-Yau-filtration-general}
Let $A$ be a $G$-graded algebra and $F$ a positive homo-filtration on $A$ with each layer $F_nA$ locally finite.  Suppose $G_F(A)$ is skew $\bar{G}$-Calabi-Yau of dimension $d$. Then  $R_F(A)$ is  skew $\bar{G}$-Calabi-Yau of dimension $d+1$ and $A$ is skew $G$-Calabi-Yau of dimension $d$. Moreover, one has
\begin{enumerate}
\item  $\mathfrak{l}_{R_F(A),\bar{G}} = \mathfrak{l}_{G_F(A),\bar{G}} + (1,0)$ and  $\mathfrak{l}_{A,G}= \pi(\mathfrak{l}_{G_F(A),\bar{G}})$,
where $\pi:\bar{G}\to G$ assigns $(n,\gamma)$ to $\gamma$.
\item $\mu_{A,G}$ preserves  the homo-filtration $F$ with
 $\mu_{R_F(A),\bar{G}} = R_F(\mu_{A,G})$ and
$\mu_{G_F(A),\bar{G}} = G_F(\mu_{A,G})$.
\end{enumerate}
\end{theorem}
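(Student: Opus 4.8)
The plan is to regard $R_F(A)$ as a normal extension in two different ways and to feed each into a result already at our disposal. On one side, in the $\bar{G}$-graded algebra $R_F(A)$ the element $t$ is homogeneous of degree $(1,0)$, central, regular and non-invertible, and $R_F(A)/(t)\cong G_F(A)$ as $\bar{G}$-graded algebras. On the other side, letting $\pi\colon\bar{G}\to G$ denote the projection and regrading $R_F(A)$ along it, in the $G$-graded algebra $\pi^*(R_F(A))$ the element $t-1$ is homogeneous of degree $0$, central, regular and non-invertible, and $\pi^*(R_F(A))/(t-1)\cong A$ as $G$-graded algebras: indeed the canonical surjection $R_F(A)\to A$ sending $at^n\mapsto a$ for $a\in F_nA$ has kernel $(t-1)$.

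First I would apply Theorem \ref{Calabi-Yau-normal-extension} to the pair $(R_F(A),t)$. Its hypotheses hold: $R_F(A)$ is locally finite because each $F_nA$ is, and the projection $p\colon\bar{G}=\Z\times G\to\Z$ satisfies $p(\supp R_F(A))\subseteq\N$ by positivity of $F$ and $p((1,0))=1>0$. As $t$ is central we have $\tau_t=\id$, so the theorem yields that $R_F(A)$ is skew $\bar{G}$-Calabi-Yau of dimension $d+1$ with $\mathfrak{l}_{R_F(A),\bar{G}}=\mathfrak{l}_{G_F(A),\bar{G}}+(1,0)$ and $\mu_{G_F(A),\bar{G}}=\overline{\mu_{R_F(A),\bar{G}}}$, the bar denoting reduction modulo $(t)$; this settles the claim about $R_F(A)$ and the first formula of (1). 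Next, Proposition \ref{Calabi-Yau-regrading} applied to $\pi$ shows that $\pi^*(R_F(A))$ is skew $G$-Calabi-Yau of dimension $d+1$, with the same Nakayama automorphism and Gorenstein parameter $\pi(\mathfrak{l}_{R_F(A),\bar{G}})$. Now feeding $(\pi^*(R_F(A)),t-1)$ into Proposition \ref{Nakayama-auto-reduce}, in which $\deg(t-1)=0$ and $\tau_{t-1}=\id$, gives $\uExt^{i,G}_{A^e}(A,A^e)=0$ for $i\neq d$ and $\uExt^{d,G}_{A^e}(A,A^e)\cong\Sigma_{\pi(\mathfrak{l}_{R_F(A),\bar{G}})}\,{}^{1}A^{\nu}$, where $\nu\in\Aut_G(A)$ is the automorphism induced by $\mu_{R_F(A),\bar{G}}$ on $A=\pi^*(R_F(A))/(t-1)$ (legitimate because $\mu_{R_F(A),\bar{G}}(t-1)=t-1$). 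Since Proposition \ref{homo-smooth-filtration} already tells us that $A$ is homologically $G$-smooth, it follows that $A$ is skew $G$-Calabi-Yau of dimension $d$ with $\mu_{A,G}=\nu$ and $\mathfrak{l}_{A,G}=\pi(\mathfrak{l}_{R_F(A),\bar{G}})=\pi(\mathfrak{l}_{G_F(A),\bar{G}})$, using $\pi((1,0))=0$; this settles the claim about $A$ and the second formula of (1).

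What remains --- the statements in (2) that the three Nakayama automorphisms are the Rees, respectively associated graded, images of a single $\mu_{A,G}$ preserving $F$ --- is the step that requires the most care, and it rests on Proposition \ref{Nakayama-auto-behavior}: $t$ being central forces $\mu_{R_F(A),\bar{G}}(t)=t$, while being degree-preserving, $\mu_{R_F(A),\bar{G}}$ stabilises each homogeneous component $(F_nA)\,t^n$ of $R_F(A)$. From these two facts I would check that $\mu_{A,G}=\nu$ maps each $F_nA$ onto itself, i.e.\ preserves the homo-filtration $F$; that under the identification $at^n\leftrightarrow a$ for $a\in F_nA$ one has $\mu_{R_F(A),\bar{G}}(at^n)=\mu_{A,G}(a)\,t^n$, i.e.\ $\mu_{R_F(A),\bar{G}}=R_F(\mu_{A,G})$; and hence, reducing modulo $(t)$, that $\mu_{G_F(A),\bar{G}}=\overline{\mu_{R_F(A),\bar{G}}}=G_F(\mu_{A,G})$. (An alternative to the $(t-1)$-reduction would be to localise, via $R_F(A)[t^{-1}]\cong A[t,t^{-1}]$ from Lemma \ref{Rees-ring-localization} together with the Laurent equivalence of Lemma \ref{Laurent-equivalence}; but then one must compare $\uExt$ over $A^e[t,t^{-1}]$ with $\uExt$ over $(A[t,t^{-1}])^e=A^e[t^{\pm},s^{\pm}]$ through a further use of Rees' Lemma for the central element $t-s$ --- which is exactly where the drop of one in dimension and the shift in the Gorenstein parameter come from --- so it is less direct.)
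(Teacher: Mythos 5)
Your proposal is correct and follows essentially the same route as the paper: apply Theorem \ref{Calabi-Yau-normal-extension} to the central element $t$, regrade along $\pi$ via Proposition \ref{Calabi-Yau-regrading}, reduce modulo the degree-zero central element $t-1$ via Proposition \ref{Nakayama-auto-reduce} (using $\mu(t)=t$ from Proposition \ref{Nakayama-auto-behavior}), and invoke Proposition \ref{homo-smooth-filtration} for the smoothness of $A$. The only cosmetic difference is that the paper defines the automorphism $\nu$ explicitly by $a\mapsto \mu(at^n)t^{-n}$ before the $(t-1)$-reduction, whereas you obtain it as the induced automorphism on $\pi^*(R_F(A))/(t-1)$ and then verify the identities in (2) afterwards; the verifications you sketch are exactly the ones the paper leaves as routine.
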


This theorem refines and generalizes  \cite[Corollary 2.16]{Ga} and \cite[Theorem 3.3]{SL}.

\begin{proof}
First note that $t$ is central in $R:=R_F(A)$. By Theorem \ref{Calabi-Yau-normal-extension}, $R$ is skew $\bar{G}$-Calabi-Yau of dimension $d+1$ with $\mu_{R,\bar{G}}$ induces $\mu_{G_F(A), \bar{G}}$ and $\mathfrak{l}_{R,\bar{G}} = \mathfrak{l}_{G_F(A),\bar{G}}+ (1,0)$; and by Proposition \ref{homo-smooth-filtration}, $A$ is homologically $G$-smooth. Also,   $\mu(t) =t$ by Proposition \ref{Nakayama-auto-behavior}.  Let $\nu: A\to A$ be the map defined by $$ F_nA \backslash F_{n-1}A \ni a \quad  \mapsto \quad \mu(at^n) t^{-n} \in F_nA.$$
It is easy to check that $\nu\in \Aut_G(A)$, $\mu_{R,\bar{G}} = R_F(\nu)$ and $\mu_{G_F(A),\bar{G}} = G_F(\nu)$. So it remains to see that $\uExt_{A^e}^{i,G}(A,A^e) =0$ for $i\neq d$ and $\uExt_{A^e}^{d, G}(A,A^e)  \cong \Sigma_{\pi(\mathfrak{l}_{R,\bar{G}})} {}^1 A^\nu$ in $\Mod^GA^e$.

Let $\mu=\mu_{R,\bar{G}}$ and $\mathfrak{l}=\mathfrak{l}_{R,\bar{G}}$.
By Proposition \ref{Calabi-Yau-regrading}, $\pi^*(R)$ is skew $G$-Calabi-Yau of dimension $d+1$ with
$\mu_{\pi^*(R),G}=\mu$ and $\mathfrak{l}_{\pi^*(R),G} = \pi(\mathfrak{l})$. Note that the homogeneous element $\hbar:=t-1\in \pi^*(R)_0$ is  regular and central, and there is a canonical isomorphism of  $G$-graded algebras $A\xrightarrow{\cong} B:=\pi^*(R)/(\hbar)$.  Therefore, by Proposition \ref{Nakayama-auto-reduce}, we have $\uExt_{A^e}^{i,G}(A,A^e) \cong \uExt_{B^e}^{i,G}(B,B^e) =0$ for $i\neq d$ and $$\uExt_{A^e}^{d,G}(A,A^e) \cong \uExt_{B^e}^{d,G}(B,B^e) \cong \Sigma_{\pi(\mathfrak{l})} {}^1B^{\bar{\mu}} \cong \Sigma_{\pi(\mathfrak{l})} {}^1A^\nu$$
in $\Mod^GA^e$, as desired. Here, $\bar{\mu}\in \Aut_G(B)$ is the one induced by $\mu$, $\uExt_{B^e}^{i,G}(B,B^e)$ and ${}^1B^{\bar{\mu}}$ are considered as $G$-graded $A$-bimodules via restricting of scalars along the canonical isomorphism $A\xrightarrow{\cong} B$.
\end{proof}

It is  well-known that Weyl algebras are Calabi-Yau. There are a number of distinct approaches to this conclusion, see \cite{Be,CWWZ,HZ,LWW,SL}. We provide a new one in the following example.
\begin{example}
Let $n\geq1$ be a positive integer. 
The $n$-th Weyl algebra $A_n=A_n(\mathbb{K})$  is the algebra generated over $\mathbb{K}$ by $2n$ generators $x_1,\cdots, x_n,\,y_1,\cdots, y_n$ with relations:
$$
x_iy_i-y_ix_i = 1, \quad x_iy_j-y_jx_i=x_ix_j-x_jx_i = y_iy_j-y_jy_i=0 \quad \text{for} \quad i\neq j.
$$
Introduce a $\Z^n$-grading on $A_n$ by putting $\deg(x_i)=e_i$ and $\deg(y_i)=-e_i$, where $e_i\in \Z^n$ has $1$ at the $i$-th component and $0$ at others. Let  $F=\{\, F_rA_n)\, \}_{r\in \Z}$ be the positive homo-filtration on $A_n$ given  by
$$
F_rA _n= \text{the $\mathbb{K}$-span of all words that have at most $r$ appearances of $y_1,\cdots, y_n$}.
$$
Clearly, all $F_rA_n$ are locally finite  and 
$$
(F_0A_n)_{e_i} = \mathbb{K}x_i \quad \text{and} \quad (F_1A_n)_{-e_i} = \mathbb{K} y_i.
$$
It is not hard to see that $G_F(A) \cong \mathbb{K}[u_1,\cdots, u_n,v_1,\cdots,v_n]$ as $\Z^{n+1}$-graded algebras. Here the polynomial algebra is graded by putting $\deg(u_i)=(0,e_i)$ and $\deg(v_i)=(1,-e_i)$.   Now apply Theorem \ref{Calabi-Yau-filtration-general}  and Proposition \ref{Calabi-Yau-regrading},  
the $n$-th Weyl algebra $A_n$ is Calabi-Yau of dimension $2n$.
\end{example}

We conclude this section with an application of our results to Ore extensions. 

\begin{corollary}\cite[Theorem 0.2] {LWW}
Let $E$ be a $G$-graded algebra contains a homogeneous subalgebra $A$ and a homogeneous  element $x$ of degree $\varepsilon$ satisfying the following two conditions:
\begin{enumerate}
 \item $E$ is a free left $A$-module with bases $\{\, 1,\, x,\, x^2,\cdots\,\}$;
 \item there is an automorphism $\sigma\in \Aut_G(A)$ such that $xa-\sigma(a) x \in A$ for all $a\in A$.
\end{enumerate}
If $A$ is locally finite  and skew $G$-Calabi-Yau of dimension $d$, then $E$ is skew $G$-Calabi-Yau of dimension $d+1$ with
 $\mathfrak{l}_{E,G}  = \mathfrak{l}_{A,G} +\varepsilon$,
 $\mu_{E,G}|_A= \mu_{A,G} \circ \sigma^{-1}$, and
 $\mu_{E,G}(x) = ux +b$ for some $u\in U(A_0)$ and $b\in A_\varepsilon$.
\end{corollary}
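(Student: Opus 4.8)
The plan is to realize the Ore extension $E$ as a normal extension (in fact as a quotient of a Rees algebra) so that Theorem~\ref{Calabi-Yau-normal-extension} and Theorem~\ref{Calabi-Yau-filtration-general} apply. First I would equip $E$ with the positive homo-filtration $F$ defined by $F_nE = \bigoplus_{i=0}^{n} A\cdot x^i$; condition (1) guarantees this is an increasing exhaustive filtration by homogeneous subspaces, and condition (2) guarantees $F_mE\cdot F_nE\subseteq F_{m+n}E$, while each layer $F_nE=\bigoplus_{i=0}^n A\cdot x^i$ is locally finite because $A$ is. The key computation is that the associated graded algebra $G_F(E)$ is the skew polynomial ring $A[z;\sigma]$, $\bar{G}$-graded with $\deg(a)=(0,\deg a)$ for $a\in A$ and $\deg(z)=(1,\varepsilon)$: indeed the image $z$ of $x$ in $F_1E/F_0E$ satisfies $za=\sigma(a)z$ because $xa-\sigma(a)x\in A=F_0E$ dies in the associated graded, and freeness of $E$ over $A$ makes $G_F(E)$ free over $A$ on the powers of $z$.

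Next I would record that a skew polynomial extension of a skew Calabi-Yau algebra by an automorphism is again skew Calabi-Yau. This is a known fact, but it also follows from the machinery already in this paper: $A[z;\sigma]$ is itself a normal extension situation --- localizing at $z$ and using that $A[z;\sigma]$ sits inside $A[z,z^{-1};\sigma]$ one can view it through the Rees-algebra picture of a suitable homo-filtration on $A[z,z^{-1};\sigma]$, or more directly one applies Theorem~\ref{Calabi-Yau-normal-extension} after observing $z$ is regular normal with $A[z;\sigma]/(z)\cong A$. Either way one obtains that $A[z;\sigma]$ is skew $\bar{G}$-Calabi-Yau of dimension $d+1$ with $\mathfrak{l}_{A[z;\sigma],\bar{G}} = \mathfrak{l}_{A,G}$ suitably lifted and Nakayama automorphism restricting to $\mu_{A,G}\circ\sigma^{-1}$ on $A$ (the $\sigma^{-1}$ coming from $\bar{\tau}_z$, since $z$ is $\sigma^{-1}$-skew as $za=\sigma(a)z$ forces $\tau_z=\sigma^{-1}$ in the left-module convention) and sending $z$ to a unit multiple of $z$ by Proposition~\ref{Nakayama-auto-behavior}.

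Then I would invoke Theorem~\ref{Calabi-Yau-filtration-general} with this $F$: since $G_F(E)\cong A[z;\sigma]$ is skew $\bar{G}$-Calabi-Yau of dimension $d+1$, the theorem yields that $R_F(E)$ is skew Calabi-Yau of dimension $d+2$ and that $E\cong \pi^*(R_F(E))/(t-1)$ is skew $G$-Calabi-Yau of dimension $d+1$, with $\mathfrak{l}_{E,G}=\pi(\mathfrak{l}_{G_F(E),\bar{G}})=\mathfrak{l}_{A,G}+\varepsilon$ and with $\mu_{E,G}$ preserving the filtration and inducing $G_F(\mu_{E,G})=\mu_{G_F(E),\bar{G}}$ on the associated graded. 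Restricting the last identity to degree-zero filtration part gives $\mu_{E,G}|_A = \mu_{A,G}\circ\sigma^{-1}$; and since $\mu_{E,G}$ preserves $F_1E=A\oplus Ax$ and its action on $F_1E/F_0E$ is multiplication by a unit, we get $\mu_{E,G}(x)=ux+b$ with $u\in U(A_0)$ (the unit scalar from Proposition~\ref{Nakayama-auto-behavior} applied to the $\sigma^{-1}$-skew element $z$ of $G_F(E)$) and $b\in A$; degree considerations force $b\in A_\varepsilon$.

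The main obstacle is the identification $G_F(E)\cong A[z;\sigma]$ and making sure the grading/Nakayama bookkeeping through the three successive reductions (Ore extension $\rightsquigarrow$ associated graded skew polynomial ring, then skew polynomial ring is itself handled by Theorem~\ref{Calabi-Yau-normal-extension}, then pull back via $\pi^*$ and kill $t-1$) is done consistently --- in particular tracking how $\tau_\hbar$-type twists compose. The skew-element convention ($z\cdot a=\sigma(a)\cdot z$ together with $\sigma(z)=z$ forces $z$ to be $\sigma^{-1}$-skew in our left-action setup) is the subtle point one must not get backwards, since it is exactly what produces the $\sigma^{-1}$ in $\mu_{E,G}|_A = \mu_{A,G}\circ\sigma^{-1}$ rather than a $\sigma$. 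Everything else is a routine unwinding of the formulas already established.
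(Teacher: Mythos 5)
Your route is the same as the paper's: filter $E$ by $F_nE=\sum_{i\le n}Ax^i$, observe that the image $z=\overline{xt}$ of $x$ is a homogeneous regular normal non-invertible element of $G_F(E)$ of degree $(1,\varepsilon)$ with $G_F(E)/(z)\cong\iota^*(A)$ (this is your identification $G_F(E)\cong A[z;\sigma]$), apply Proposition \ref{Calabi-Yau-regrading} and Theorem \ref{Calabi-Yau-normal-extension} to get that $G_F(E)$ is skew $\bar G$-Calabi-Yau of dimension $d+1$, then Theorem \ref{Calabi-Yau-filtration-general} for $E$, and finally Proposition \ref{Nakayama-auto-behavior} applied to the skew element $z$ to see that the leading coefficient $u$ is a unit. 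The bookkeeping of layers, degrees and Gorenstein parameters is all consistent with the paper's proof, and your vaguer alternative via $A[z,z^{-1};\sigma]$ is not needed since you also give the direct normal-extension argument.

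However, at exactly the step you flag as the subtle one, your twist direction is backwards relative to the paper's conventions. The paper defines $\tau_\hbar$ by $\hbar\cdot a=\tau_\hbar(a)\cdot\hbar$, and calls $z$ ``$\sigma$-skew'' when $z\cdot a=\sigma(a)\cdot z$; so the relation $za=\sigma(a)z$ gives $\tau_z=\sigma$ on (the image of) $A$, not $\tau_z=\sigma^{-1}$, and $z$ is $\sigma$-skew, not $\sigma^{-1}$-skew. The $\sigma^{-1}$ in the final formula does not come from $\overline{\tau_z}$ itself; it comes from solving the identity of Theorem \ref{Calabi-Yau-normal-extension}, namely $\mu_{\bar R,G}=\overline{\mu_{R,G}}\circ\overline{\tau_\hbar}$ with $R=G_F(E)$ and $\bar R\cong\iota^*(A)$, which reads $\mu_{A,G}=\overline{\mu_{G_F(E),\bar G}}\circ\sigma$ and hence $\overline{\mu_{G_F(E),\bar G}}\big|_A=\mu_{A,G}\circ\sigma^{-1}$. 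If one took your claim $\tau_z=\sigma^{-1}$ at face value and inserted it into that identity, one would obtain $\mu_{E,G}|_A=\mu_{A,G}\circ\sigma$, contradicting the statement being proved; so as written your justification only reaches the correct formula by two compensating sign errors. This does not affect the unit claim (Proposition \ref{Nakayama-auto-behavior}(3) gives $\mu(z)=\xi z$ with $\xi$ a unit for whichever automorphism makes $z$ skew), but the derivation of $\mu_{E,G}|_A=\mu_{A,G}\circ\sigma^{-1}$ should be rewritten with $\overline{\tau_z}|_A=\sigma$ and the inversion performed inside Theorem \ref{Calabi-Yau-normal-extension}'s formula.
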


\begin{proof}
Let $F=\{F_nE\}_{n\in\Z}$ be the positive homo-filtration on $E$ with each layer $F_nE:=\sum_{i\leq n}Ax^i$. For an element $a\in R_F(E)= \sum_{n\in\Z} (F_nE )t^n$, we denote by 
$\bar{a}$ the coset of $a$ in $G_F(E)=R_F(E)/(t)$. Clearly, $\hbar:= \overline{xt} \in G_F(E)$  is a homogeneous regular normal non-invertible element  of degree $(1,\varepsilon)$.  Let $\iota:G\to \bar{G}$ be the map given by  $\gamma\mapsto (0,\gamma)$ . 
One gets an isomorphism of $\bar{G}$-graded algebras $\iota^*(A) \xrightarrow{\cong} G_F(E)/(\hbar)$
by assigning $a\in A$ to $\bar{a} +(\hbar)$. Assume $A$ is locally finite and skew $G$-Calabi-Yau of dimension $d$. By Proposition \ref{Calabi-Yau-regrading} and Theorem \ref{Calabi-Yau-normal-extension},  the algebra $G_F(E)$ is skew $\bar{G}$-Calabi-Yau of dimension $d+1$ with  $\mathfrak{l}_{G_F(E),\bar{G}} =(1,\mathfrak{l}_{A,G}+\varepsilon)$  and $\mu_{G_F(E), \bar{G}}(\bar{a}) =\overline{\mu_{A,G}(\sigma^{-1}(a))}$ for all $a\in F_0E= A$.  Then Theorem \ref{Calabi-Yau-filtration-general} tells us $E$ is skew $G$-Calabi-Yau of dimension $d+1$ with $\mathfrak{l}_{E,G} = \mathfrak{l}_{A,G}+\varepsilon$, $\mu_{E,G}(a) = \mu_{A,G}(\sigma^{-1} (a)$ for all $a\in A$  and $\mu_{E,G}(x) = ux +b$ for some $u\in A_0$ and $b\in A_{\varepsilon}$.  Since $\mu_{G_F(E),\bar{G}}(\hbar) = G_F(\mu_{E,G})(\hbar) = \overline{\mu_{E,G}(x)} = \bar{u} \hbar$,  it follows that $\bar{u}$ is a unit in $G_F(E)$ by Proposition \ref{Nakayama-auto-behavior}. Thus,  $u$ is a unit in $A$.
\end{proof}

\vskip5mm

\noindent{\it Acknowledgments.} G.-S. Zhou is supported by the NSFC (Grant No. 11601480); Y. Shen is supported by the NSFC (Grant Nos. 11626215, 11701515);  D.-M. Lu is supported by the NSFC (Grant No. 11671351).

\vskip7mm

\end{document}